\documentclass[conference, 10pt, twocolumn]{ieeeconf}       
\bstctlcite{bstctl:etal, bstctl:nodash, bstctl:simpurl}
\def\BibTeX{{\rm B\kern-.05em{\sc i\kern-.025em b}\kern-.08em
    T\kern-.1667em\lower.7ex\hbox{E}\kern-.125emX}}
    
\usepackage[left=54pt, right=54pt,bottom=54pt, top=54pt]{geometry}

\usepackage{amsmath,mathrsfs,amsfonts,amssymb,graphicx,epsfig}
 \usepackage{amsthm}
\usepackage{subcaption}
\usepackage{color,multirow,rotating,tcolorbox}
\usepackage{algorithm,algpseudocode,algorithmicx}
\usepackage{cite,url,framed,bm,balance,nicematrix,physics}
\usepackage{stmaryrd,mathtools}

\newtheorem{theorem}{Theorem}

\newtheorem{proposition}{Proposition}
\newtheorem{remark}{Remark}

\newtheorem{mytheorem}{Theorem}[]      

\usepackage{tikz}
\usetikzlibrary{calc,trees,positioning,arrows,chains,shapes.geometric,decorations.pathreplacing,decorations.pathmorphing,shapes, matrix,shapes.symbols}

\newcommand{\DampedNewton}{%
  \raisebox{2pt}{\tikz{
    \draw[line width=1pt] (0,0) -- (5mm,0);
    \node[diamond, fill, inner sep=1.3pt] at (0.25,0) {};
  }}%
}

\newcommand{\OlikerPrussner}{%
  \protect\raisebox{2pt}{\protect\tikz{
    \protect\draw[black, dashed, line width=1pt] (0,0) -- (5mm,0);
    \protect\filldraw[fill=black, draw=black] (2.5mm,0) circle (1.5pt);
  }}%
}

\usepackage[breaklinks]{hyperref}

\makeatletter
\newcommand{\pushright}[1]{\ifmeasuring@#1\else\omit\hfill$\displaystyle#1$\fi\ignorespaces}

\newcommand{\dotminus}{\mathbin{\text{\@dotminus}}}

\newcommand{\@dotminus}{%
  \ooalign{\hidewidth\raise1ex\hbox{.}\hidewidth\cr$\m@th-$\cr}%
}

\allowdisplaybreaks

\title{\LARGE\textbf{Control Laguerre Tessellation:\\Semi-discrete Optimal Transport Over Control Systems
}}

\author{Ripon C. Sarker, Abhishek Halder
\thanks{Ripon C. Sarker and Abhishek Halder are with the Department of Aerospace Engineering, Iowa State University, Ames, IA 50011, USA, {\tt\footnotesize{\{rcsarker,ahalder\}@iastate.edu}}.}
\thanks{This research was partially supported by NSF award 2111688.}
}

\IEEEoverridecommandlockouts
\begin{document}
\bstctlcite{IEEEexample:BSTcontrol}
\maketitle
\thispagestyle{empty}
\pagestyle{empty}

\begin{abstract}
We study the optimal transport of optimally controlled agents from a compactly supported absolutely continuous source to a discrete target measure. The ground cost for the transport is induced by the optimal cost of the agents' motion. When this ground cost satisfies the twist condition, the optimal transport map is given almost everywhere in terms of a Laguerre tessellation of the state space. We refer to this control-theoretic generalization of Laguerre tessellation as Control Laguerre Tessellation (CLT), and illustrate it for two ground costs induced by linear controlled agents with minimum energy and minimum time objectives. 
\end{abstract}

\section{Introduction}\label{sec:Intro}
Consider a large population of indistinguishable active agents, i.e., identical control systems, with absolutely continuous normalized\footnote{Here $\int_{\mathcal{X}}\differential\mu = 1$.} population measure $\mu$ supported over a compact state space $\mathcal{X}\subseteq\mathbb{R}^{n}$. Suppose we want to optimally transport this population to a finitely supported target measure $\nu = \sum_{i=1}^{r}\nu_i\delta_{\bm{y}_{i}}$ with fixed $r\in\mathbb{N}_{\geq 2}$, where $\delta_{\bm{y}_i}$ denotes the Dirac delta at $\bm{y}_i\in\mathbb{R}^{n}$ $\forall i\in\llbracket r\rrbracket := \{1,\hdots,r\}$, and $(\nu_1,\hdots,\nu_r)\in\Delta^{r-1}\,\text{(standard simplex)}\,\subset\mathbb{R}^{r}_{\geq 0}$. We assume that the target states are distinct, i.e., $\bm{y}_i \neq \bm{y}_{j}$ for $i\neq j$. The vector\footnote{We use the boldfaced $\bm{\nu}$ for a vector on the standard simplex, and the unboldfaced $\nu$ for the corresponding discrete measure.} $\bm{\nu}:=(\nu_1,\hdots,\nu_r)$ can be interpreted as either normalized capacities or relative importance of the target states $\bm{y}_1, \hdots, \bm{y}_r$. Fig. \ref{fig:IntroFigure} shows an SDOT instance with $n=2$, $r=5$.

The large population of active agents comprising the source measure $\mu$ may represent micron-sized chiplets controlled by electric field for micro-assembly \cite{edwards2014controlling,nodozi2023controlled,matei20232d}, or magnetic nanoparticles for targeted drug delivery \cite{zhang2024nanorobot,snezhko2011magnetic,wang2021trends}. In economic resource allocation, $\mu$ may represent a population (e.g., people, children in a city) and $\nu$ may represent localized resource (e.g., coffee shops, elementary schools) with given capacities. Such applications naturally lead to the \emph{semi-discrete optimal transport (SDOT) problem} \cite[Ch. 5]{gabriel2019computational}:
\begin{align}
\underset{T:\mathcal{X}\rightarrow\{\bm{y}_i\}_{i=1}^{r}\mid T_{\#}\mu = \nu}{\text{minimize}} \quad \int_{\mathcal{X}}c(\bm{x},T(\bm{x}))\differential\mu
\label{SDOT}
\end{align}
where $T_{\#}\mu$ denotes the pushforward of $\mu$ via the transport map $T$, and $c:\mathcal{X}\times\{\bm{y}_i\}_{i=1}^{r}\mapsto\mathbb{R}_{\geq 0}$ is a known ground cost induced by the active agents. The triple $\mu,\nu,c$ comprise the problem data for \eqref{SDOT}.

For $\bm{x}\in\mathcal{X},i\in\llbracket r\rrbracket$, the $c(\bm{x},\bm{y}_i)$ quantifies the cost of transporting unit amount of mass from the source state $\bm{x}$ to the target state $\bm{y}_i$. The term ``semi-discrete" refers to that the source measure $\mu$ is absolutely continuous w.r.t. the Lebesgue measure on $\mathcal{X}$ while the target measure $\nu$ is discrete (weighted sum of Dirac masses). SDOT has found applications in fluid dynamics \cite{gallouet2018lagrangian,bourne2026semi}, machine learning \cite{de2012blue,houdard2023generative}, mean field games \cite{sarrazin2022lagrangian}, and robotic coverage control \cite{inoue2020optimal,napolitano2026optimal}. 

The purpose of this work is to explore the solution of \eqref{SDOT} when $c$ is induced by an optimal control cost. Our motivation comes from applications such as micro-assembly and drug delivery mentioned earlier, where optimality of the individual agent's controlled trajectories are desired in addition to the collective transport guarantees. Specifically, we consider the individual agents to be identical control systems with initial states distributed in the support of $\mu$. The agents move to minimize certain performance objective (e.g., energy, time), and incur ground cost via their optimal motion. We will formalize the problem in Sec. \ref{sec:ProbFormulation}. 

\begin{figure}
\centering
\includegraphics[width=0.85\linewidth]{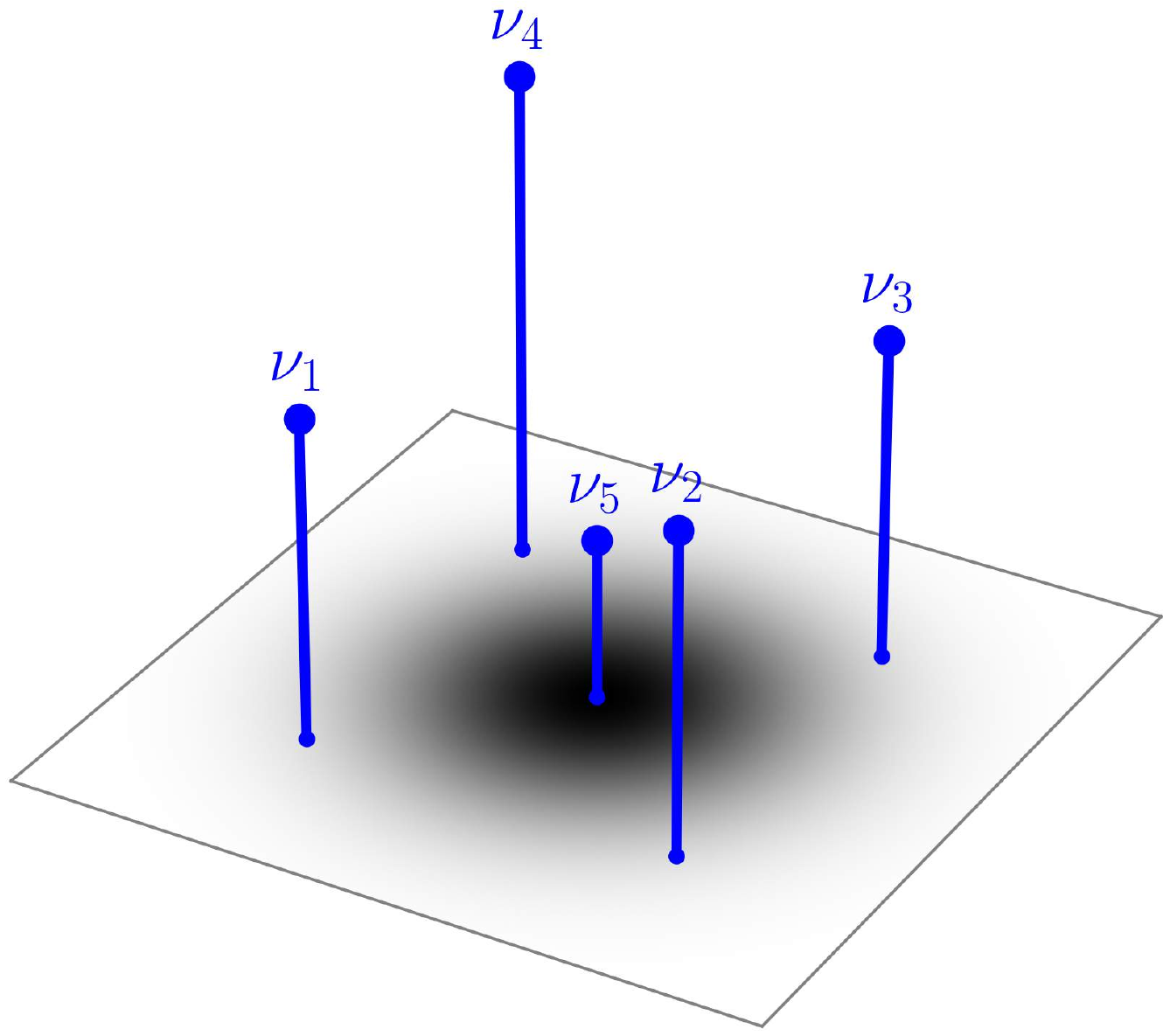}
\caption{{\small{An instance of SDOT for $n=2, r=5$. The source measure $\mu$ (dark = high, light = low values) is supported on $\mathcal{X}=[-1,1]^2$, and $\bm{\nu}=(\nu_1,\hdots,\nu_5) = (0.2, 0.2, 0.2, 0.3, 0.1)$.}}
}
\label{fig:IntroFigure}
\vspace*{-0.2in}
\end{figure}

If the ground cost $c$ satisfies the \emph{twist condition} \cite[Def. 1.16]{santambrogio2015optimal}:
\begin{itemize}
\item $c$ is differentiable $\forall \bm{x}\in\mathcal{X}$, and $\bm{y}_i \mapsto \nabla_{\bm{x}}c(\bm{x},\bm{y}_i)\in \mathrm{T}_{\bm{x}}^{*}\mathcal{X}$ (cotangent space) is injective $\forall i\in\llbracket r\rrbracket, \bm{x}\in\mathcal{X}$,
\end{itemize}
then the minimizer in \eqref{SDOT} a.k.a. the \emph{optimal transport map} $T^{\mathrm{opt}}$ has a simple geometric characterization in terms of the \emph{Laguerre cell} associated to $\bm{y}_i$, $i\in\llbracket r\rrbracket$,
\begin{align}
{\mathrm{Lag}}_{i}(\bm{\psi}) :=\{
\bm{x} \in \mathcal{X}
\mid c(\bm{x},\bm{y}_i) + \psi_i
\le
&c(\bm{x},\bm{y}_j) + \psi_j \nonumber\\ 
&\forall j\in\llbracket r\rrbracket\setminus\{i\}\},
\label{defLaguerreCell}
\end{align}
defined in terms of certain \emph{dual potential} or \emph{weight} vector $\bm{\psi}=(\psi_1,\dots,\psi_r)\in\mathbb{R}^{r}$. Specifically, the following result is well-known \cite[Thm. 40, Prop. 37]{merigot2021optimal}.

\begin{proposition}\label{Prop:Topt}
Consider the semi-discrete optimal transport problem \eqref{SDOT} for a given ground cost $c:\mathcal{X}\times\{\bm{y}_i\}_{i=1}^{r}\mapsto\mathbb{R}_{\geq 0}$ satisfying the twist condition. Then
\begin{itemize}
    \item $\exists T^{\mathrm{opt}}$ minimizing \eqref{SDOT} that is unique $\mu$-almost everywhere (a.e.),
    \item $\exists\bm{\psi}\in\mathbb{R}^{r}$ such that $\mathcal{X}=\sqcup_{i=1}^{r}{\mathrm{Lag}}_{i}(\bm{\psi})$ is a Laguerre tessellation with\footnote{Here ${\mathrm{vol}}_{n}$ denotes the $n$-dimensional Lebesgue volume.} ${\mathrm{vol}}_{d}\!\left({\mathrm{Lag}}_{i}(\bm{\psi}) \cap {\mathrm{Lag}}_{j}(\bm{\psi})\!\right)\! =\! 0\forall i\neq j$, 
    \item $T^{\mathrm{opt}}$ is piecewise constant, and is given $\mu$-a.e. as
\begin{align}
T^{\mathrm{opt}}(\bm{x}) = \bm{y}_i \; \text{if}\;\bm{x}\in{\mathrm{interior}}\!\left({\mathrm{Lag}}_{i}(\bm{\psi})\!\right)\!\forall i\in\llbracket r\rrbracket.
\label{Topt}    
\end{align}
\end{itemize}
\end{proposition}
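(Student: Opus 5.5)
The plan is the standard semi-discrete duality argument of Kantorovich type, specialized to a target with finitely many atoms. The first step is to pass to the Kantorovich dual of the relaxed problem. For $\bm{\psi}\in\mathbb{R}^{r}$, I define the $c$-transform $\psi^{c}(\bm{x}):=\min_{i\in\llbracket r\rrbracket}\{c(\bm{x},\bm{y}_i)+\psi_i\}$ and the concave dual functional
\begin{align*}
\Phi(\bm{\psi}) := \int_{\mathcal{X}}\psi^{c}(\bm{x})\,\differential\mu - \sum_{i=1}^{r}\psi_i\nu_i .
\end{align*}
Weak duality is immediate. For any admissible $T$ we have $c(\bm{x},T(\bm{x}))\ge \psi^{c}(\bm{x})-\psi_{T(\bm{x})}$, and integrating against $\mu$ while using $T_{\#}\mu=\nu$ gives that the cost of $T$ is at least $\Phi(\bm{\psi})$.

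The second step shows that Laguerre cells overlap only on null sets. This is where the twist condition enters. For $i\neq j$, any $\bm{x}\in\mathrm{Lag}_i(\bm{\psi})\cap\mathrm{Lag}_j(\bm{\psi})$ lies in the level set $\{c(\cdot,\bm{y}_i)-c(\cdot,\bm{y}_j)=\psi_j-\psi_i\}$. By twist, $\nabla_{\bm{x}}\big(c(\bm{x},\bm{y}_i)-c(\bm{x},\bm{y}_j)\big)\neq 0$, so with $C^1$ regularity the implicit function theorem makes this level set a hypersurface of Lebesgue measure zero. Since $\mu\ll$ Lebesgue, these intersections are $\mu$-null. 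The cells also cover $\mathcal{X}$, because the minimum over finitely many indices is always attained. Together these give the tessellation claim $\mathcal{X}=\sqcup_i\mathrm{Lag}_i(\bm{\psi})$ up to null sets, and they show that the map $T_{\bm{\psi}}(\bm{x}):=\bm{y}_i$ on $\mathrm{Lag}_i(\bm{\psi})$ is well defined $\mu$-a.e.

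The third step, which I expect to be the main obstacle, is existence of a maximizer $\bm{\psi}^{\star}$ of $\Phi$ with $\mu(\mathrm{Lag}_i(\bm{\psi}^{\star}))=\nu_i$ for all $i$. The route I would follow has three parts.
\begin{enumerate}
\item Compute $\partial\Phi/\partial\psi_i=\mu(\mathrm{Lag}_i(\bm{\psi}))-\nu_i$. The function $\Phi$ is concave as an infimum of affine functions integrated, and the step-two null-overlap property makes it differentiable.
\item Use two facts: $\Phi$ is invariant under $\bm{\psi}\mapsto\bm{\psi}+t\mathbf{1}$, and $c$ is bounded on the compact set $\mathcal{X}\times\{\bm{y}_i\}$. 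If some $\psi_i-\min_j\psi_j$ exceeds $2\|c\|_\infty$, then $\mathrm{Lag}_i$ is empty, and one can check that $\Phi$ does not increase in that direction. So the supremum may be taken over a compact set $\{\bm{\psi}:\min_j\psi_j=0,\ \max_i\psi_i\le 2\|c\|_\infty+1\}$, and by continuity a maximizer exists.
\item At the maximizer the gradient vanishes, giving $\mu(\mathrm{Lag}_i)=\nu_i$. If some $\nu_i=0$, the maximum might only be attained at a boundary point of the set in part 2, and I would handle that case by using supergradients instead.
\end{enumerate}
Consequently $(T_{\bm{\psi}^\star})_{\#}\mu=\nu$. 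Moreover $c(\bm{x},T_{\bm{\psi}^\star}(\bm{x}))=\psi^{\star c}(\bm{x})-\psi^\star_{T(\bm{x})}$ on each cell, so the cost of $T_{\bm{\psi}^\star}$ equals $\Phi(\bm{\psi}^\star)$. By weak duality, $T^{\mathrm{opt}}:=T_{\bm{\psi}^\star}$ is optimal, which gives \eqref{Topt}. Interiors suffice because the boundaries are $\mu$-null.

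The last step is uniqueness $\mu$-a.e. Let $T$ be any optimal map. Then $\int\big[c(\bm{x},T(\bm{x}))-\psi^{\star c}(\bm{x})+\psi^\star_{T(\bm{x})}\big]\differential\mu=0$, and the integrand is nonnegative, so it vanishes $\mu$-a.e. Hence $T(\bm{x})$ attains the minimum defining $\psi^{\star c}(\bm{x})$, i.e. $\bm{x}\in\mathrm{Lag}_{T(\bm{x})}(\bm{\psi}^\star)$. Off the null overlap set this index is unique, so $T=T_{\bm{\psi}^\star}$ $\mu$-a.e.

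Since the statement is cited from \cite[Thm. 40, Prop. 37]{merigot2021optimal}, one could simply invoke that reference. The steps above are the self-contained route I would follow, with the coercivity and compactness argument for $\bm{\psi}$ being the delicate part.
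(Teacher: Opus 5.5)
Your proposal is correct and follows essentially the route the paper relies on: the paper gives no proof of its own, citing \cite[Thm. 40, Prop. 37]{merigot2021optimal}, and its footnote sketches the same mechanism. There, your $\Phi$ coincides with its concave Kantorovich functional $\mathcal{K}$, whose gradient under twist is $\bm{G}(\bm{\psi})-\bm{\nu}$ and whose maximizer solves \eqref{GpsiEqualsnu}, while your weak-duality and complementary-slackness steps supply the optimality and $\mu$-a.e.\ uniqueness that the citation covers. Two small touch-ups: the stated twist condition gives only differentiability, so either adopt the $\mathcal{C}^{1}$ hypothesis of the reference or replace the implicit function theorem by a Lebesgue density-point argument (a level set of a differentiable function with nonvanishing gradient is Lebesgue-null); and the supergradient hedge for $\nu_i=0$ is unnecessary, since your truncation argument makes the maximizer over the compact set a global maximizer on all of $\mathbb{R}^{r}$, where the gradient of the differentiable $\Phi$ must vanish.
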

\noindent Here $\bm{x}$ being in the interior of ${\mathrm{Lag}}_{i}(\bm{\psi})$ refers to a strict inequality in \eqref{defLaguerreCell}. Equality in \eqref{defLaguerreCell} occurs when $\bm{x}$ is at the boundary of two adjacent Laguerre cells (which has ${\mathrm{vol}}_{n}$ measure zero).

Define the map $\bm{G}:\mathbb{R}^{r}\mapsto\Delta^{r-1}$ with components$$G_{i}(\bm{\psi}):=\int_{\mathrm{Lag}_{i}(\bm{\psi})}\!\differential\mu\quad\forall i\in\llbracket r\rrbracket.$$Combining \eqref{Topt} with the constraint $T^{\mathrm{opt}}_{\#}\mu = \nu$ yields a system of $r$ nonlinear equations to solve for the unknown $\bm{\psi}\in\mathbb{R}^{r}$:
\begin{align}
\bm{G}(\bm{\psi})=\bm{\nu}.
\label{GpsiEqualsnu}
\end{align}
Equation \eqref{GpsiEqualsnu} is the \emph{discrete Monge-Ampère equation} \cite{meyron2019initialization}, and can be seen as direct consequence of Kantorovich duality\footnote{Under the twist condition on $c$, the Kantorovich dual functional$$\mathcal{K}(\bm{\psi}):=\sum_{i=1}^{r}\int_{{\mathrm{Lag}}_{i}(\bm{\psi})}(c(\bm{x},\bm{y}_i)+\psi_i)\differential\mu - \sum_{i=1}^{r}\nu_i\psi_i$$ is concave and $\mathcal{C}^2$ in $\bm{\psi}$, and achieves unconstrained maximum that equals the constrained minimum value in \eqref{SDOT}. The unconstrained maximizer solves $\nabla_{\bm{\psi}}\mathcal{K} = G(\bm{\psi}) - \bm{\nu} = \bm{0}$, which is precisely \eqref{GpsiEqualsnu}.}. Given $\mu$ and $\bm{\nu}$, the solution for \eqref{GpsiEqualsnu} is unique up to an additive constant $k\in\mathbb{R}$ because $\psi_{i} \mapsto \psi_{i} + k$ $\forall i\in\llbracket r\rrbracket$, leaves \eqref{defLaguerreCell}, and thus the Laguerre tessellation invariant. Two standard numerical methods to solve \eqref{GpsiEqualsnu} are the damped Newton algorithm \cite{kitagawa2019convergence} and the Oliker-Prussner coordinate descent \cite[Sec. 4.2]{merigot2021optimal}, both have known convergence guarantees \cite[Sec. 4.2, 4.3]{merigot2021optimal}. Our numerics in Sec. \ref{sec:Numerical} will use these two.

\begin{remark}\label{Remark:DependenceOnGroundCost}
Equation \eqref{GpsiEqualsnu} and its solution depend on the ground cost $c$ because ${\mathrm{Lag}}_{i}(\bm{\psi})$ depends on $c$, see \eqref{defLaguerreCell}.
\end{remark}

\noindent\textbf{Related works.} The Laguerre tessellations generalize the well-known \cite{aurenhammer2013voronoi} Voronoi tessellations: the latter correspond to $\bm{\psi}\equiv\bm{0}$ in \eqref{defLaguerreCell}. There is a large literature \cite{aurenhammer1998minkowski,levy2015numerical,de2019differentiation} with numerical toolbox \cite{pysdot} on SDOT for squared Euclidean ground cost $c(\bm{x},\bm{y})=\|\bm{x}-\bm{y}\|_2^2$, in which case, the Laguerre tessellation is referred to as the \emph{power diagram} \cite[Ch. 6.2]{aurenhammer2013voronoi}. Another well-studied case is $1$-norm ground cost $c(\bm{x},\bm{y})=\|\bm{x}-\bm{y}\|_1$, in which case, the tessellation is known as the \emph{Apollonius diagram} \cite[Ch. 5]{gabriel2019computational}. 

SDOT for other ground costs are relatively less investigated except for $p$-norm costs for $p\in(1,\infty)$ \cite{dieci2019boundary}, for positive combinations of such $p$-norms \cite{dieci2025solving}, and for logarithmic cost on the sphere for reflector design \cite{de2017numerical}. SDOT for optimal control induced ground costs, as studied here, has not appeared before.

\noindent\textbf{Contributions and organization.} 
\begin{itemize}
    \item We propose (Sec. \ref{sec:ProbFormulation}) a generalization of the SDOT problem \eqref{SDOT} and its solution for ground costs induced by optimal control problems with prescribed endpoints.

    \item We detail two instances of the proposed formulation: minimum energy (Sec. \ref{sec:MinEnergy}) and minimum time (Sec. \ref{sec:MinTime}) ground costs for variants of controlled linear dynamics, and prove both admit the generalized solution structure. 

    \item Numerical results (Sec. \ref{sec:Numerical}) illustrate the developments. 
\end{itemize}


\section{Problem Formulation}\label{sec:ProbFormulation}

We consider a variant of the SDOT problem \eqref{SDOT} where the ground cost $c(\bm{x},\bm{y})$ solves an optimal control problem:
\begin{align}
c\left(\bm{x},\bm{y}\right) = &\underset{\bm{u}_t\in\mathcal{U}}{\text{minimum}} \int_{0}^{t_{\mathrm{f}}}L\left(t,\bm{x}_t,\bm{u}_t\right)\:\differential t\nonumber\\
&\text{subject to}\quad\dot{\bm{x}}_t=\bm{f}\left(t,\bm{x}_t,\bm{u}_t\right),\nonumber\\
&\qquad\qquad\quad\bm{x}_{0} =\bm{x}, \quad\bm{x}_{1}=\bm{y}.
\label{cGeneric}    
\end{align}
The final time $t_{\mathrm{f}}$ could be fixed or free. At time $t\in[0,t_{\mathrm{f}}]$, the state $\bm{x}_t\in\mathbb{R}^{n}$, the control $\bm{u}_t \in\mathcal{U}\subseteq\mathcal{C}\left([0,t_{\mathrm{f}}];\mathbb{R}^{m}\right)$, the endpoints $\bm{x},\bm{y}\in\mathbb{R}^{n}$ are fixed, and $\bm{x}\neq\bm{y}$. In \eqref{cGeneric}, the vector field $\bm{f}$ and the Lagrangian $L$ are assumed to be sufficiently regular (e.g., $\bm{f}$ being Lipschitz and controllable, $L$ being coercive and strictly convex in $\bm{u}_t$) to guarantee existence-uniqueness of the minimum $c(\bm{x},\bm{y})$.

Notice that when $L = \|\bm{u}_t\|_2^2$, $\bm{f}=\bm{u}_t$, then \eqref{cGeneric} equals $c_{\mathrm{SqEuclidean}}(\bm{x},\bm{y})=\Vert\bm{x}-\bm{y}\Vert_2^2$, the squared Euclidean distance, which is the prototypical SDOT example where Proposition \ref{Prop:Topt} holds and the tessellation of $\mathcal{X}$ is the power diagram. It is natural to expect that for sufficiently regular $\bm{f},L$ in \eqref{cGeneric}, the ground cost $c$ will satisfy the twist condition, and thereby the solution of \eqref{SDOT} will induce a control-theoretic generalization of the Laguerre tessellation, referred hereafter as the \emph{control Laguerre tessellation (CLT)}.

We next illustrate the same for two particular instances of \eqref{cGeneric}, viz. minimum energy (Sec. \ref{sec:MinEnergy}) and minimum time (Sec. \ref{sec:MinTime}) ground costs for variants of controlled linear dynamics.


\section{Minimum Energy SDOT over A Linear Control System}\label{sec:MinEnergy}
The minimum energy guidance of a linear time varying control system from $\bm{x}\in\mathbb{R}^{n}$ to $\bm{y}\in\mathbb{R}^{n}$ over fixed time horizon $[0,1]$ incurs the cost
\begin{align}
c_{\mathrm{MinEnergy}}\left(\bm{x},\bm{y}\right) := &\underset{\bm{u}_t\in\mathcal{U}}{\text{minimum}} \int_{0}^{1}\Vert\bm{u}_t\Vert_2^2\:\differential t\nonumber\\
&\text{subject to}\quad\dot{\bm{x}}_t=\bm{A}_t\bm{x}_t + \bm{B}_t\bm{u}_t,\nonumber\\
&\qquad\qquad\quad\bm{x}_{0} =\bm{x}, \quad\bm{x}_{1}=\bm{y},
\label{cLinearMinEnergy}    
\end{align}
where the matricial trajectory pair $\left(\bm{A}_t,\bm{B}_{t}\right)\in\mathbb{R}^{n\times n}\times\mathbb{R}^{n\times m}$ is bounded and continuous w.r.t. $t\in[0,1]$, and is uniformly controllable in the sense the controllability Gramian $\bm{M}_{ts}$ in any non-empty $[s,t]\subseteq[0,1]$ remains positive definite, i.e.,
\begin{align}
\boldsymbol{M}_{t s}:=\int_s^t \boldsymbol{\Phi}_{t \tau} \boldsymbol{B}_{\tau} \boldsymbol{B}_{\tau}^{\top} \boldsymbol{\Phi}_{t \tau}^{\top} \:\differential\tau \succ\bm{0},\quad 0\leq s < t \leq 1,
\label{DefCtrbGramian}
\end{align}
where $\bm{\Phi}_{ts}$ is the associated state transition matrix. In \eqref{cLinearMinEnergy}, the feasible set $\mathcal{U}$ is given by
\begin{align}
\!\!\mathcal{U}\!:=\!\{\bm{u}_{t}\in\mathcal{C}\left([0,1];\mathbb{R}^{m}\right) \!\mid \!\!\int_{0}^{1}\!\!\!\Vert\bm{u}_t\Vert_2^2\differential t < \infty\, \forall t\in[0,1] \}.
\label{defU}    
\end{align}
The explicit expression for \eqref{cLinearMinEnergy} is well-known \cite[p. 194]{lee1967foundations}, \cite[Sec. III-B]{teter2023contraction}:
\begin{align}
c_{\mathrm{MinEnergy}}(\bm{x},\bm{y}) = \left(\bm{\Phi}_{10}\bm{x}-\bm{y}\right)^{\top}\bm{M}_{10}^{-1}\left(\bm{\Phi}_{10}\bm{x}-\bm{y}\right). \label{cLinearMinEnergyExplicit} 
\end{align}
In the special case $\left(\bm{A}_t,\bm{B}_{t}\right)\equiv\left(\bm{0},\bm{I}\right)$ $\forall t\in[0,1]$, we have $\bm{\Phi}_{10}=\bm{M}_{10}=\bm{I}$, and \eqref{cLinearMinEnergyExplicit} reduces to $c_{\mathrm{SqEuclidean}}(\bm{x},\bm{y})$. 

Consider an invertible linear map $\ell:(\bm{x},\bm{y})\mapsto(\bm{M}_{10}^{-1/2}\bm{\Phi}_{10}\bm{x},\bm{M}_{10}^{-1/2}\bm{y})$, which is well-defined because $\left(\bm{A}_t,\bm{B}_{t}\right)$ being controllable, the Gramian matrix $\bm{M}_{10}$ and its inverse are positive definite, and admits unique positive definite (principal) square root $\bm{M}_{10}^{-1/2}$. It will be convenient to view $c_{\mathrm{MinEnergy}}$ as a composite map:
\begin{align}
c_{\mathrm{MinEnergy}} = c_{\mathrm{SqEuclidean}} \circ \ell.
\label{CompositionWithLinear}
\end{align}
For this ground cost, our main result is the following.

\begin{theorem}\label{thm:MinEnergyTwist}
Consider the SDOT problem \eqref{SDOT} with ground cost $c_{\mathrm{MinEnergy}}$ given by \eqref{cLinearMinEnergy}, absolutely continuous source measure $\mu$ supported on compact $\mathcal{X}\subset\mathbb{R}^{n}$, and a finitely supported target measure $\nu = \sum_{i=1}^{r}\nu_i\delta_{\bm{y}_{i}}$ where $\bm{y}_i\in\mathbb{R}^{n}$ $\forall i\in\llbracket r\rrbracket := \{1,\hdots,r\}$, and $(\nu_1,\hdots,\nu_r)\in\Delta^{r-1}$. Then
\begin{itemize}
    \item[(i)] $\exists$ $T^{\mathrm{opt}}$ that is unique $\mu$-a.e., and is given by \eqref{Topt} for some $\bm{\psi}\in\mathbb{R}^{r}$ solving \eqref{GpsiEqualsnu},
    \item[(ii)] $\mathcal{X}=\sqcup_{i=1}^{r}{\mathrm{Lag}}_{i}(\bm{\psi})$ is a convex polyhedral tessellation. 
\end{itemize}
\end{theorem}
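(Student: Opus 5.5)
The plan is to reduce both claims to Proposition \ref{Prop:Topt} by checking the twist condition directly from the explicit formula \eqref{cLinearMinEnergyExplicit}, and then to read off the polyhedral structure by cancelling the quadratic term in $\bm{x}$ that is common to all the cell inequalities. Write $\bm{Q}:=\bm{M}_{10}^{-1}\succ\bm{0}$, which is well defined by the controllability assumption \eqref{DefCtrbGramian}.

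\textbf{Step 1: twist condition.} The map $\bm{x}\mapsto c_{\mathrm{MinEnergy}}(\bm{x},\bm{y}_i)$ is a quadratic polynomial, so it is $\mathcal{C}^\infty$ on $\mathcal{X}$, with gradient
$$\nabla_{\bm{x}}c_{\mathrm{MinEnergy}}(\bm{x},\bm{y}_i)=2\bm{\Phi}_{10}^{\top}\bm{Q}\left(\bm{\Phi}_{10}\bm{x}-\bm{y}_i\right).$$
Fix $\bm{x}$. For $i\neq j$ the difference of the two gradients is $-2\bm{\Phi}_{10}^{\top}\bm{Q}(\bm{y}_i-\bm{y}_j)$. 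The state transition matrix $\bm{\Phi}_{10}$ is always invertible, and $\bm{Q}\succ\bm{0}$ is invertible. The targets are assumed distinct, so $\bm{y}_i-\bm{y}_j\neq\bm{0}$, and therefore this difference is nonzero. Hence $\bm{y}_i\mapsto\nabla_{\bm{x}}c(\bm{x},\bm{y}_i)$ is injective for every $\bm{x}$, which is the twist condition. Equivalently, one can use \eqref{CompositionWithLinear}: $c_{\mathrm{SqEuclidean}}$ is twisted, and $\ell$ is a product of invertible linear maps, so by the chain rule injectivity is preserved.

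\textbf{Step 2: part (i).} The cost $c_{\mathrm{MinEnergy}}$ is nonnegative, $\mu$ is absolutely continuous with compact support $\mathcal{X}$, and Step 1 gives the twist condition. So Proposition \ref{Prop:Topt} applies directly. It gives existence and $\mu$-a.e. uniqueness of $T^{\mathrm{opt}}$ and the form \eqref{Topt} for some $\bm{\psi}$. Imposing the constraint $T^{\mathrm{opt}}_{\#}\mu=\nu$ on \eqref{Topt} then shows that this $\bm{\psi}$ solves \eqref{GpsiEqualsnu}.

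\textbf{Step 3: part (ii).} Expand the inequality $c(\bm{x},\bm{y}_i)+\psi_i\le c(\bm{x},\bm{y}_j)+\psi_j$ from \eqref{defLaguerreCell}. The term $\bm{x}^{\top}\bm{\Phi}_{10}^{\top}\bm{Q}\bm{\Phi}_{10}\bm{x}$ appears on both sides and cancels. What remains is the affine inequality
$$2(\bm{y}_j-\bm{y}_i)^{\top}\bm{Q}\bm{\Phi}_{10}\bm{x}\le \bm{y}_j^{\top}\bm{Q}\bm{y}_j-\bm{y}_i^{\top}\bm{Q}\bm{y}_i+\psi_j-\psi_i.$$
Its normal vector $\bm{\Phi}_{10}^{\top}\bm{Q}(\bm{y}_j-\bm{y}_i)$ is nonzero by Step 1, so this is a genuine closed half-space. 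Thus $\mathrm{Lag}_i(\bm{\psi})$ is the intersection of $\mathcal{X}$ with $r-1$ closed half-spaces. The intersection $\mathrm{Lag}_i(\bm{\psi})\cap\mathrm{Lag}_j(\bm{\psi})$ lies in the bounding hyperplane, so it has zero $\mathrm{vol}_n$. The cells cover $\mathcal{X}$ because every $\bm{x}$ attains $\min_k\{c(\bm{x},\bm{y}_k)+\psi_k\}$ for some $k$.

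A cleaner way to see the same structure is through $\ell$. In the coordinates $\bm{z}=\bm{Q}^{1/2}\bm{\Phi}_{10}\bm{x}$ the cells become exactly the power diagram of the points $\bm{Q}^{1/2}\bm{y}_i$. The tessellation of $\mathcal{X}$ is then its preimage under an invertible linear map, and such a map sends convex polyhedra to convex polyhedra.

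\textbf{Main obstacle.} The only delicate point is convexity of the cells. Each cell is a polyhedron intersected with $\mathcal{X}$, so it is a convex polyhedron only when $\mathcal{X}$ itself is convex and polyhedral (for example a box), or when the statement is read as the polyhedral tessellation of $\mathbb{R}^n$ restricted to $\mathcal{X}$. I would make this interpretation explicit: the tessellation is induced by convex polyhedral cells in $\mathbb{R}^n$, intersected with $\mathcal{X}$. The twist verification and the appeal to Proposition \ref{Prop:Topt} are routine.
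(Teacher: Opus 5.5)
Your proposal is correct and follows the paper's proof essentially step for step: the twist condition comes from the gradient being affine in $\bm{y}$ with an invertible linear part, Proposition~\ref{Prop:Topt} then gives (i), and cancelling the common quadratic term in $\bm{x}$ gives $r-1$ half-spaces for (ii). Your gradient $2\bm{\Phi}_{10}^{\top}\bm{M}_{10}^{-1}(\bm{\Phi}_{10}\bm{x}-\bm{y})$ is the correct chain-rule result; the paper's displayed matrix $\bm{M}_{10}^{-1/2}\bm{\Phi}_{10}\bm{M}_{10}^{-1/2}$ is a slip, though it does not affect injectivity. Your convexity caveat is also legitimate: the paper avoids it by writing the cells as subsets of $\mathbb{R}^{n}$, which is your second interpretation.
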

\begin{proof}
(i) From \eqref{cLinearMinEnergyExplicit}, $c_{\mathrm{MinEnergy}}$ is quadratic and thus differentiable w.r.t. $\bm{x}$. Using \eqref{CompositionWithLinear} and the chain rule,
$$\nabla_{\bm{x}}c_{\mathrm{MinEnergy}}(\bm{x},\bm{y}) = 2\bm{M}_{10}^{-1/2}\bm{\Phi}_{10}\bm{M}_{10}^{-1/2}\left(\bm{\Phi}_{10}\bm{x}-\bm{y}\right),$$
which is affine in $\bm{y}$. In particular, the affine map $$\bm{y}\mapsto 2\bm{M}_{10}^{-1/2}\bm{\Phi}_{10}\bm{M}_{10}^{-1/2}\left(\bm{\Phi}_{10}\bm{x}-\bm{y}\right)$$ is injective iff the matrix $\bm{M}_{10}^{-1/2}\bm{\Phi}_{10}\bm{M}_{10}^{-1/2}$ has trivial nullspace. This is indeed the case because the state transition matrix $\bm{\Phi}_{10}$ is nonsingular, and $\bm{M}_{10}^{-1/2}$ is positive definite. Thus, $c_{\mathrm{MinEnergy}}$ satisfies the twist condition. Hence Proposition \ref{Prop:Topt} applies and claim (i) follows. 

(ii) From \eqref{CompositionWithLinear},
$c_{\mathrm{MinEnergy}}(\bm{x},\bm{y})=\|\bm{M}_{10}^{-1/2}\bm{\Phi}_{10}\bm{x}-\bm{M}_{10}^{-1/2}\bm{y}\|_2^2$. Substituting this expression for ground cost in \eqref{defLaguerreCell}, expanding the squares and regrouping terms, we find
\begin{align}
&{\mathrm{Lag}}_{i}(\bm{\psi}) = \{\bm{x}\in\mathbb{R}^{n}\mid\langle 2\bm{\Phi}_{10}^{\top}\bm{M}_{10}^{-1}\left(\bm{y}_j - \bm{y}_{i}\right),\bm{x}\rangle \nonumber\\
&\leq \bm{y}_j^{\top}\bm{M}_{10}^{-1}\bm{y}_j - \bm{y}_i^{\top}\bm{M}_{10}^{-1}\bm{y}_i + \psi_j - \psi_i \quad\forall j\in\llbracket r\rrbracket\setminus\{i\}\}.
\label{LagiLTVMinEnergy}    
\end{align}
Notice that \eqref{LagiLTVMinEnergy} is of the form $\bigcap_{j\neq i}\{\bm{x}\in\mathbb{R}^{n}\mid \langle\bm{a}_{ij},\bm{x}\rangle\leq b_{ij}\}$, so ${\mathrm{Lag}}_{i}(\bm{\psi})$ is an intersection of $r-1$ halfspaces, i.e., a convex polyhedron $\forall i\in\llbracket r\rrbracket$. That $\mathcal{X}=\sqcup_{i=1}^{r}{\mathrm{Lag}}_{i}(\bm{\psi})$ is a tessellation follows from Proposition \ref{Prop:Topt}. This completes the proof of claim (ii).
\end{proof}
\begin{remark}\label{Remark:SplCasePowerDiagm}
Recalling that in the special case $\left(\bm{A}_t,\bm{B}_{t}\right)\equiv\left(\bm{0},\bm{I}\right)$ $\forall t\in[0,1]$, $\bm{\Phi}_{10}=\bm{M}_{10}=\bm{I}$, the expression \eqref{LagiLTVMinEnergy} for the $i$th Laguerre cell simplifies, and $\mathcal{X}=\sqcup_{i=1}^{r}{\mathrm{Lag}}_{i}(\bm{\psi})$ reduces to the power diagram. This is expected since then $\ell$ becomes the identity map, and $c_{\mathrm{MinEnergy}} = c_{\mathrm{SqEuclidean}}$.
\end{remark}


\section{Minimum Time SDOT}\label{sec:MinTime}
Inspired by \cite{bakolas2010zermelo}, we consider minimum time ground cost between a source location $\bm{x}\in\mathbb{R}^{n}$ and a target location $\bm{y}\in\mathbb{R}^{n}$, $\bm{x}\neq\bm{y}$, given by
\begin{align}
\!\!\!\!c_{\mathrm{MinTime}}\left(\bm{x},\bm{y}\right) := &\underset{\bm{u}_{t}\in\mathcal{U}_{1}}{\text{minimum}} \int_{0}^{t_{\mathrm{f}}}\differential t\nonumber\\
&\text{subject to}\;\dot{\bm{x}}_t=\bm{u}_t + \bm{w}_t,\nonumber\\
&\qquad\qquad\bm{x}_{0} =\bm{x}, \quad\bm{x}_{t_{\mathrm{f}}}=\bm{y}\neq\bm{x},
\label{cZermeloMinTime}    
\end{align}
where the feasible set
\begin{align}
\mathcal{U}_{1}:=\{\bm{u}_t \in\mathcal{C}\left([0,\infty);\mathbb{R}^{m}\right) \mid \|\bm{u}_t\|_2 \leq 1\:\forall t\geq 0\},
\end{align}
and $\bm{w}_t$ is a \emph{known} time-varying exogenous vector field. We assume that $\bm{w}_t$ is continuous and $\|\bm{w}_t\|_2<1\:\forall t\geq 0$. 

For example, if we consider the state $\bm{x}_t$ to be position of a vehicle in $\mathbb{R}^2$ or $\mathbb{R}^{3}$, then we may interpret $\bm{w}_t$ as wind speed or river current vector field. The ground cost in \eqref{cZermeloMinTime} computes the minimum time $t_{\mathrm{f}}$ to steer the state $\bm{x}\in\mathbb{R}^{n}$ at $t=0$ to $\bm{y}\in\mathbb{R}^{n}$ via bounded control from $\mathcal{U}_{1}$ in the presence of known additive exogenous input $\bm{w}_t$.

Problem \eqref{cZermeloMinTime} and its generalizations have a long history in the optimal control literature, see e.g., \cite{zermelo1931navigationsproblem,kelley1962guidance}. However, an SDOT formulation where individual agents solve instances of \eqref{cZermeloMinTime}, is new.

Unlike $c_{\mathrm{MinEnergy}}$ in Sec. \ref{sec:MinEnergy}, no general formula is available for $c_{\mathrm{MinTime}}$. We establish the following.

\begin{theorem}
Consider the SDOT problem \eqref{SDOT} with ground cost $c_{\mathrm{MinTime}}$ given by \eqref{cZermeloMinTime} where $\bm{w}_t$ is continuous and $\|\bm{w}_t\|_2<1\:\forall t\geq 0$, absolutely continuous source measure $\mu$ supported on compact $\mathcal{X}\subset\mathbb{R}^{n}$, and a finitely supported target measure $\nu = \sum_{i=1}^{r}\nu_i\delta_{\bm{y}_{i}}$ where $\bm{y}_i\in\mathbb{R}^{n}$ $\forall i\in\llbracket r\rrbracket := \{1,\hdots,r\}$, and $(\nu_1,\hdots,\nu_r)\in\Delta^{r-1}$. If the optimal controller $\bm{u}^{\mathrm{opt}}$ in \eqref{cZermeloMinTime} is such that
the projection $\langle\bm{u}^{\mathrm{opt}},\int_0^t\bm{w}_{\tau}\differential\tau\rangle$
is strictly convex or strictly concave in $t\geq 0$, then
\begin{itemize}
    \item[(i)] $\exists$ $T^{\mathrm{opt}}$ that is unique $\mu$-a.e., and is given by \eqref{Topt} for some $\bm{\psi}\in\mathbb{R}^{r}$ solving \eqref{GpsiEqualsnu},
    \item[(ii)] $\mathcal{X}=\sqcup_{i=1}^{r}{\mathrm{Lag}}_{i}(\bm{\psi})$ is a tessellation. 
\end{itemize}
\end{theorem}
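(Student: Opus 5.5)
Following the template of the proof of Theorem \ref{thm:MinEnergyTwist}, I will verify that $c_{\mathrm{MinTime}}$ satisfies the twist condition. Proposition \ref{Prop:Topt} then gives (i), and also gives (ii): the Laguerre cells \eqref{defLaguerreCell} cover $\mathcal{X}$ and have pairwise intersections of zero volume. Unlike the minimum energy case, I do not expect convexity or polyhedrality of the cells, so (ii) only asserts a tessellation.

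\textbf{Step 1: structure of the optimal control.} First I will solve \eqref{cZermeloMinTime} via Pontryagin's maximum principle. The Hamiltonian $H=\lambda_0+\langle\bm{p},\bm{u}+\bm{w}_t\rangle$ has no state dependence, so the costate $\bm{p}$ is constant. Minimizing $H$ over $\|\bm{u}\|_2\le 1$ then gives a constant unit control $\bm{u}^{\mathrm{opt}}=-\bm{p}/\|\bm{p}\|_2$. Integrating the dynamics, the terminal condition becomes
\begin{align*}
\bm{y}-\bm{x}=t_{\mathrm{f}}\,\bm{u}^{\mathrm{opt}}+\bm{W}(t_{\mathrm{f}}),\qquad \bm{W}(t):=\int_0^t\bm{w}_\tau\differential\tau .
\end{align*}
Hence $c(\bm{x},\bm{y})=t_{\mathrm{f}}$ is the smallest $t>0$ with $\|\bm{y}-\bm{x}-\bm{W}(t)\|_2=t$. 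Existence of such a $t$ follows from $\|\bm{w}_t\|_2<1$: the function $\phi(t):=\|\bm{y}-\bm{x}-\bm{W}(t)\|_2-t$ is positive at $t=0^+$ and eventually negative. Here I take $m=n$ implicitly.

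\textbf{Step 2: differentiability via the implicit function theorem.} Define $F(\bm{x},t):=\|\bm{y}-\bm{x}-\bm{W}(t)\|_2^2-t^2$. At $t=t_{\mathrm{f}}$, writing $\bm{u}:=\bm{u}^{\mathrm{opt}}$,
\begin{align*}
\partial_t F=-2t_{\mathrm{f}}\big(\langle \bm{u},\bm{w}_{t_{\mathrm{f}}}\rangle+1\big),
\end{align*}
which is strictly negative because $\|\bm{w}\|_2<1$ and $\|\bm{u}\|_2=1$. So the implicit function theorem gives $c$ differentiable in $\bm{x}$, with
\begin{align*}
\nabla_{\bm{x}}c(\bm{x},\bm{y})=-\frac{\bm{u}}{1+\langle\bm{u},\bm{w}_{t_{\mathrm{f}}}\rangle}.
\end{align*}
Since $\bm{y}\ne\bm{x}$ and $t_{\mathrm{f}}>0$, the smallest root is a transversal crossing of $\phi$, so it persists locally.

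\textbf{Step 3: injectivity (main obstacle).} I need $\bm{y}_i\mapsto\nabla_{\bm{x}}c(\bm{x},\bm{y}_i)$ to be injective for fixed $\bm{x}$. The norm of the gradient is $1/(1+\langle\bm{u},\bm{w}_{t_{\mathrm{f}}}\rangle)$ and its direction is $-\bm{u}$. So equal gradients for $\bm{y}_i,\bm{y}_j$ force the same $\bm{u}$ and the same value of $\langle\bm{u},\bm{w}_{t_{\mathrm{f}}}\rangle$, at times $t_i,t_j$.

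Suppose $t_i\ne t_j$. Since $\bm{w}_t=\dot{\bm{W}}(t)$, the function $g(t):=\langle\bm{u},\bm{W}(t)\rangle$ would satisfy $g'(t_i)=g'(t_j)$. This contradicts the hypothesis that $g$ is strictly convex or strictly concave, because $g'$ is then strictly monotone. Hence $t_i=t_j$.

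With the same $\bm{u}$ and the same time, Step 1 gives $\bm{y}_i=\bm{x}+t\bm{u}+\bm{W}(t)=\bm{y}_j$, contradicting distinctness of the targets. This step is the main obstacle. It is exactly where the stated convexity hypothesis must be used, and I must check that the hypothesis is applied along the common direction $\bm{u}$.

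With the twist condition verified, Proposition \ref{Prop:Topt} yields (i) and (ii).
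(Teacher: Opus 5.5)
Your proposal is correct and follows the paper's proof essentially step for step. Both arguments use the implicit equation $c=\|\bm{y}-\bm{x}-\int_0^{c}\bm{w}_\tau\,\mathrm{d}\tau\|_2$, then the implicit function theorem with $\partial_t F\neq 0$ because $1+\langle\bm{u}^{\mathrm{opt}},\bm{w}_{c}\rangle>0$, then the same gradient formula, and finally injectivity via a common unit direction plus strict monotonicity of $t\mapsto\langle\bm{u}^{\mathrm{opt}},\bm{w}_t\rangle$. The only differences are that you obtain the constant unit control from Pontryagin's principle rather than from the paper's shifted coordinate $\bm{z}_t=\bm{x}_t-\int_0^t\bm{w}_\tau\,\mathrm{d}\tau$ with $\|\bm{z}_{t_{\mathrm{f}}}-\bm{z}_0\|_2\le t_{\mathrm{f}}$, and that your claim that $\phi$ is eventually negative does not follow from the pointwise bound $\|\bm{w}_t\|_2<1$ alone (it needs e.g.\ $\sup_t\|\bm{w}_t\|_2<1$); existence is presupposed in the statement, so nothing depends on it.
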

\begin{proof}
We will show that $c_{\mathrm{MinTime}}$ satisfies the twist condition. Defining new state coordinate $\bm{z}_t := \bm{x}_t - \int_{0}^{t}\bm{w}_{\tau}\differential\tau$, we have $\dot{\bm{z}}_t = \bm{u}_t$ and 
\begin{align}
\|\bm{z}_{t_{\mathrm{f}}} - \bm{z}_0\|_2 = \bigg\Vert\int_{0}^{t_{\mathrm{f}}}\bm{u}_t\differential t\bigg\Vert_2 \leq \int_{0}^{t_{\mathrm{f}}}\Vert\bm{u}_t\Vert_2 \differential t \leq t_{\mathrm{f}},
\label{zCoordInequality}    
\end{align}
where the last inequality follows from the constraint $\bm{u}_t \in\mathcal{U}_1$. Thus, $t_{\mathrm{f}}$ is minimal when equality occurs in \eqref{zCoordInequality}, which is achieved by the unit vector $\bm{u}^{\mathrm{opt}}=(\bm{z}_{t_{\mathrm{f}}} - \bm{z}_0)/\|\bm{z}_{t_{\mathrm{f}}} - \bm{z}_0\|_2$. Then the optimal value
$t_{\mathrm{f}}^{\mathrm{opt}} = \|\bm{z}_{t_{\mathrm{f}}} - \bm{z}_0\|_{2}$. Since $c_{\mathrm{MinTime}}\equiv t_{\mathrm{f}}^{\mathrm{opt}}$, $\bm{z}_0 = \bm{x}$, $\bm{z}_{t_{\mathrm{f}}} = \bm{y}-\int_{0}^{t_{\mathrm{f}}}\bm{w}_{\tau}\differential\tau$, so $c_{\mathrm{MinTime}}$ solves 
\begin{align}
c_{\mathrm{MinTime}} = \Vert\bm{y}-\int_{0}^{c_{\mathrm{MinTime}}}\bm{w}_{\tau}\differential\tau - \bm{x}\Vert_2.
\label{ImplicitEqn}
\end{align}
For convenience, we square both sides of \eqref{ImplicitEqn} and let
\begin{align}
&F(c_{\mathrm{MinTime}},\bm{x})\nonumber\\
&:= c_{\mathrm{MinTime}}^2 - \Vert\bm{y}-\int_{0}^{c_{\mathrm{MinTime}}}\bm{w}_{\tau}\differential\tau - \bm{x}\Vert_2^2 = 0,
\label{defFimplicit}
\end{align}
treating $\bm{y}$ as a fixed parameter with $\bm{y}\neq\bm{x}$ (per assumption). Now our idea is to apply the implicit function theorem \cite[Thm. 9.28]{rudin1976principles} to \eqref{defFimplicit} for establishing differentiability of $c_{\mathrm{MinTime}}$ w.r.t. $\bm{x}$. This requires us to verify two things: 
\begin{itemize}
    \item $F$ is $\mathcal{C}^{1}$ w.r.t. both $c_{\mathrm{MinTime}},\bm{x}$,
    \item $\frac{\partial F}{\partial c_{\mathrm{MinTime}}}\neq 0\:\forall\bm{x}\neq\bm{y}$.
\end{itemize}

For the $\mathcal{C}^{1}$ smoothness, it suffices to note that by fundamental theorem of calculus, the term $\int_{0}^{c_{\mathrm{MinTime}}}\bm{w}_{\tau}\differential\tau$ is $\mathcal{C}^{1}$ w.r.t. $c_{\mathrm{MinTime}}$ because $\bm{w}_t$ is continuous w.r.t. $t\geq 0$, and that $\|\cdot\|_2^2$ is a smooth function of its arguments. 

To show $\frac{\partial F}{\partial c_{\mathrm{MinTime}}}$ does not vanish, using \eqref{defFimplicit} we compute
$$\frac{\partial F}{\partial c_{\mathrm{MinTime}}}= 2c_{\mathrm{MinTime}} - 2\langle\bm{y}-\int_{0}^{c_{\mathrm{MinTime}}}\!\!\!\!\bm{w}_{\tau}\differential\tau - \bm{x},-\bm{w}_{c_{\mathrm{MinTime}}}\rangle.$$Recalling $\bm{u}^{\mathrm{opt}}=(\bm{y}-\int_{0}^{c_{\mathrm{MinTime}}}\!\!\bm{w}_{\tau}\differential\tau - \bm{x})/c_{\mathrm{MinTime}}$, this expression simplifies to
\begin{align}
\frac{\partial F}{\partial c_{\mathrm{MinTime}}} = 2c_{\mathrm{MinTime}}(1 + \langle\bm{u}^{\mathrm{opt}},\bm{w}_{c_{\mathrm{MinTime}}}\rangle).
\label{partialFpartialc}
\end{align}
Note that $c_{\mathrm{MinTime}}\neq 0\forall\bm{x}\neq\bm{y}$. By the Cauchy-Schwarz inequality, $\langle\bm{u}^{\mathrm{opt}},\bm{w}_{c_{\mathrm{MinTime}}}\rangle \geq -\|\bm{u}^{\mathrm{opt}}\|_2 \|\bm{w}_{c_{\mathrm{MinTime}}}\|_2$. Because $\bm{u}^{\mathrm{opt}}$ is a unit vector, $1+\langle\bm{u}^{\mathrm{opt}},\bm{w}_{c_{\mathrm{MinTime}}}\rangle\geq 1 - \|\bm{w}_{c_{\mathrm{MinTime}}}\|_2$. Also, $\|\bm{w}_t\|_2 < 1\:\forall t\geq 0$ implies $1 - \|\bm{w}_{c_{\mathrm{MinTime}}}\|_2 > 0$. Therefore,
\begin{align}
1 + \langle\bm{u}^{\mathrm{opt}},\bm{w}_{c_{\mathrm{MinTime}}}\rangle >0.
\label{SignOfOnePlusInnerProduct}\end{align}
So \eqref{partialFpartialc} equals twice of a nonzero term times a positive term, hence is nonzero $\forall\bm{x}\neq\bm{y}$. 

Having verified both conditions needed to invoke the implicit function theorem, differentiability of $c_{\mathrm{MinTime}}$ w.r.t. $\bm{x}$ follows from the same.

We next show that the mapping $\bm{y}\mapsto\nabla_{\bm{x}}c_{\mathrm{MinTime}}(\bm{x},\bm{y})$ is injective for fixed $\bm{x}$. Applying Leibniz rule, \eqref{ImplicitEqn} gives
\begin{align}
\nabla_{\bm{x}}c_{\mathrm{MinTime}} = -\frac{\bm{u}^{\mathrm{opt}}}{1 + \langle \bm{u}^{\mathrm{opt}},\bm{w}_{c_{\mathrm{MinTime}}}\rangle}.
\label{gradxOfc}    
\end{align}
For fixed $\bm{x}$, suppose there exist two terminal states $\bm{y}_1,\bm{y}_2$ such that $\nabla_{\bm{x}}c_{\mathrm{MinTime}}(\bm{x},\bm{y}_1)=\nabla_{\bm{x}}c_{\mathrm{MinTime}}(\bm{x},\bm{y}_2)=\bm{g}$ (the common gradient vector). Our goal is to prove that $\bm{y}_1 = \bm{y}_2$. Suppose the corresponding minimum times are $c_1,c_2$ where $c_1\neq c_2$, and the respective optimal controls $\bm{u}^{\mathrm{opt}}_{c_{1}},\bm{u}^{\mathrm{opt}}_{c_{2}}$. From \eqref{SignOfOnePlusInnerProduct} and \eqref{gradxOfc}, the optimal control is negative scalar times the common gradient vector $\bm{g}$. However, the optimal controls being unit vectors, we must have $\bm{u}^{\mathrm{opt}}_{c_{1}}=\bm{u}^{\mathrm{opt}}_{c_{2}}=\bm{u}^{\mathrm{opt}}=-\bm{g}/\|\bm{g}\|_2$. Then \eqref{gradxOfc} yields 
\begin{align}
\langle \bm{u}^{\mathrm{opt}},\bm{w}_{c_{1}}-\bm{w}_{c_{2}}\rangle=0.
\label{ZeroInnerProduct}
\end{align}

As $\bm{w}_t$ is continuous, $\langle\bm{u}^{\mathrm{opt}},\int_0^t\bm{w}_{\tau}\differential\tau\rangle$ is $\mathcal{C}^{1}$ w.r.t. $t\geq 0$. So strict convexity or concavity of $\langle\bm{u}^{\mathrm{opt}},\int_0^t\bm{w}_{\tau}\differential\tau\rangle$ implies that its derivative, namely $\langle\bm{u}^{\mathrm{opt}},\bm{w}_{t}\rangle$, is strictly increasing or decreasing. Therefore, $c_{1}\neq c_2$ implies $\langle \bm{u}^{\mathrm{opt}},\bm{w}_{c_{1}}-\bm{w}_{c_{2}}\rangle\neq 0$, contradicting \eqref{ZeroInnerProduct}. Hence $c_1 = c_2$, and $\bm{u}^{\mathrm{opt}}_{c_{1}}=\bm{u}^{\mathrm{opt}}_{c_{2}}$ yields
$$\frac{\bm{y}_1-\int_{0}^{c_1}\!\!\bm{w}_{\tau}\differential\tau - \bm{x}}{c_1}=\frac{\bm{y}_2-\int_{0}^{c_1}\!\!\bm{w}_{\tau}\differential\tau - \bm{x}}{c_1}\:\Rightarrow\:\bm{y}_1=\bm{y}_2,$$
establishing the desired injectivity.

Since $c_{\mathrm{MinTime}}$ is differentiable w.r.t. $\bm{x}$, and $\bm{y}\mapsto\nabla_{\bm{x}}c_{\mathrm{MinTime}}(\bm{x},\bm{y})$ is injective, so $c_{\mathrm{MinTime}}$ satisfies the twist condition. Then Proposition \ref{Prop:Topt} applies and (i)-(ii) follows.
\end{proof}
\begin{remark}
The projection condition amounts to $\langle\bm{u}^{\mathrm{opt}},\bm{w}_{t}\rangle$ not taking the same value twice, and was used in the injectivity proof.    
\end{remark}
\begin{remark}\label{Remark:NonconvexCLT}
Unlike the case for $c_{\mathrm{MinEnergy}}$ in Theorem \ref{thm:MinEnergyTwist}, the tessellation induced by $c_{\mathrm{MinTime}}$ can be nonconvex. E.g., see the numerical results in Table \ref{tab:picturegrid}, third column. 
\end{remark}


\section{Numerical Experiments}\label{sec:Numerical}
In this Section, we illustrate and compare the CLTs for three ground costs, viz. the squared Euclidean ground cost $c_{\mathrm{SqEuclidean}}$, the minimum energy ground cost $c_{\mathrm{MinEnergy}}$ in \eqref{cLinearMinEnergy}, and the minimum time ground cost $c_{\mathrm{MinTime}}$ in \eqref{cZermeloMinTime}. All numerics were performed using Python version \texttt{3.13} on a \texttt{13th Gen Intel(R) Core(TM) i9-13900} processor with 2.00 GHz clock speed and 32 GB RAM.

\subsection{Simulation Set Up}\label{subsec:SimSetup}
\noindent\textbf{Source and target measures.} To compare the CLTs with different ground costs, we fix the source measure
$\mu=\mathcal{N}\left(\bm{0}_{2\times 1},0.13 \bm{I}_2\right)$ re-normalized over the domain $\mathcal{X} = [-1,1]^2$. We fix the target measure
$$\nu = \displaystyle\sum_{i=1}^{5}\nu_i\delta_{\bm{y}_{i}}, \quad (\nu_1,\hdots,\nu_5) = (0.2, 0.2, 0.2, 0.3, 0.1),$$
where the vectors $\bm{y}_1,\hdots,\bm{y}_5\in\mathbb{R}^2$ are the first to fifth columns of the $2\times 5$ matrix
$$\begin{bmatrix}
-0.5 & 0.5 & 0.5 & -0.5 & 0\\
-0.5 & -0.5 & 0.5 & 0.5 & 0
\end{bmatrix}.$$
These $\mu,\nu$ are shown in Fig. \ref{fig:IntroFigure}.

\begin{figure}
\includegraphics[width=\linewidth]{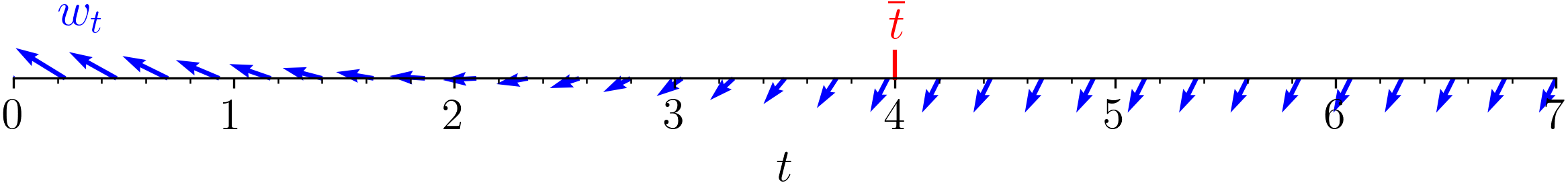}
\caption{{\small{The exogeneous input $\bm{w}_t$ in \eqref{wprofile} over time $t\in[0,7]$.}}}
\label{fig:w}
\vspace*{-0.2in}
\end{figure}

\noindent{\textbf{Control System Parameters for $c_{\mathrm{MinEnergy}}$.}} For minimum energy ground cost  \eqref{cLinearMinEnergy}, we use the double integrator, i.e.,
$$\bm{A}_t \equiv \begin{pmatrix}
0 & 1\\
0 & 0
\end{pmatrix}, \quad \bm{B}_t \equiv\begin{pmatrix}0\\1\end{pmatrix} \quad\forall t\in[0,1],$$
and hence \cite[Theorem 1(ii)]{haddad2020prediction},
$$\bm{\Phi}_{10} = \begin{pmatrix}
1 & 1\\
0 & 1
\end{pmatrix}, \quad \bm{M}_{10}^{-1} =  \begin{pmatrix}
12 & -6\\
-6 & 4
\end{pmatrix}.$$
From \eqref{cLinearMinEnergyExplicit}, the computation of $c_{\mathrm{MinEnergy}}$ is then analytical.

\begin{table*}[t!]
            \centering
\caption{{\small{Numerical results for the simulation set up detailed in Sec. \ref{subsec:SimSetup}. The three columns are for the three ground costs: $c_{\mathrm{SqEuclidean}}$, $c_{\mathrm{MinEnergy}}$, $c_{\mathrm{MinTime}}$. \emph{First row}: CLTs $\sqcup_{i=1}^{5}R_i$ where $R_i := {\mathrm{Lag}}_i(\bm{\psi})$ for the targets $\{\bm{y}_i\}_{i=1}^{5}$  with ground cost contours, \emph{second row}: comparison of CLTs from the damped Newton and Oliker-Prussner algorithms with source $\mu$ as background color (dark=high, light=low values) and target $\nu$ as red filled circles (large=high, small=low values), \emph{third row}: the evolution of error $\|\bm{G}-\bm{\nu}\|_{\infty}$ over the iteration index, \emph{fourth row}: convergence of the components of the dual potential or weight $\bm{\psi}\in\mathbb{R}^{5}$ (\protect\DampedNewton~damped Newton, \protect\OlikerPrussner~Oliker-Prussner).}}
}
            \label{tab:picturegrid}
                \begin{tabular}{ccc}
                $c_{\mathrm{SqEuclidean}}$ & $c_{\mathrm{MinEnergy}}$ & $c_{\mathrm{MinTime}}$\\
\includegraphics[width=0.32\linewidth]{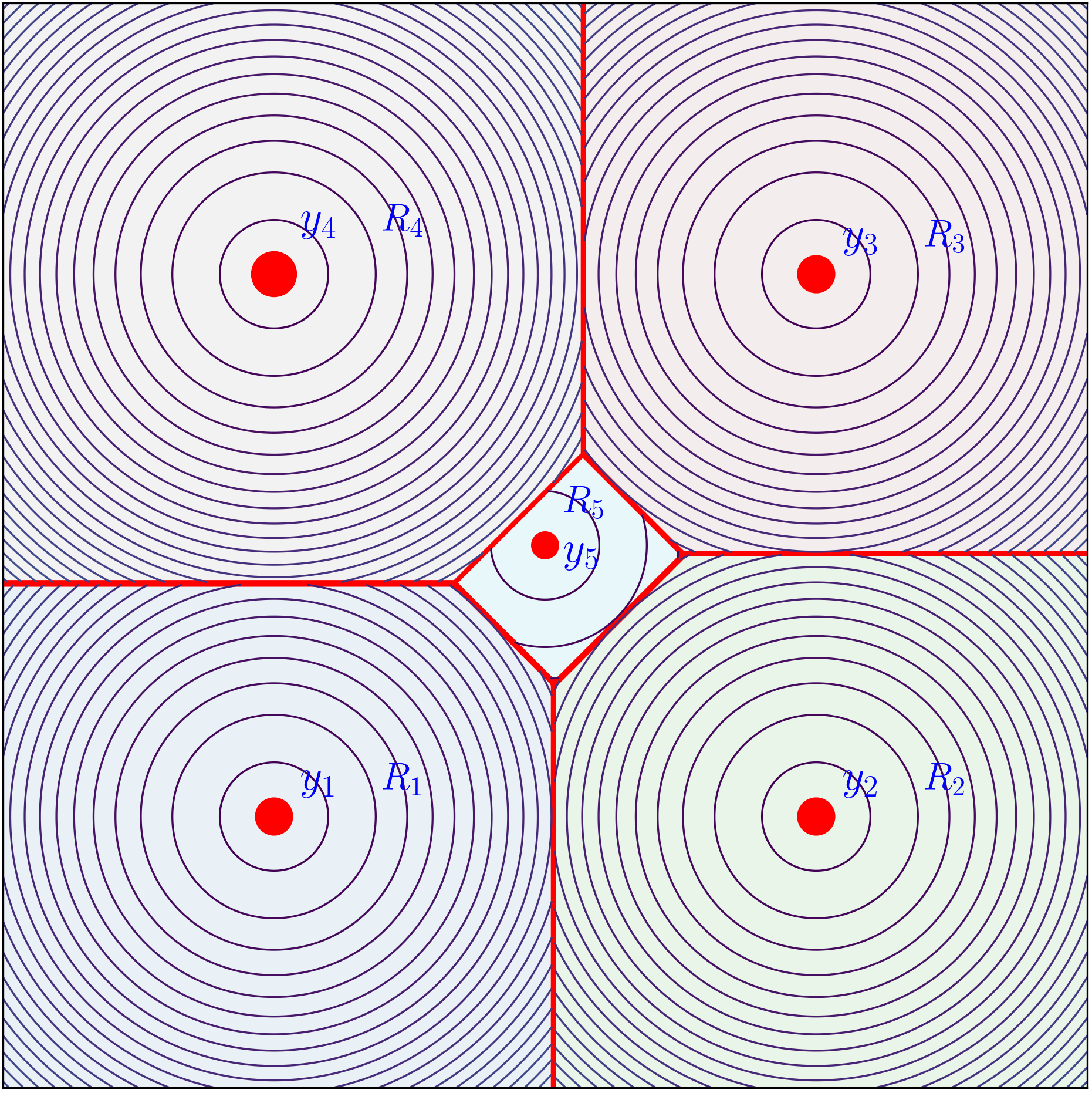}
                    & \includegraphics[width=0.32\linewidth]{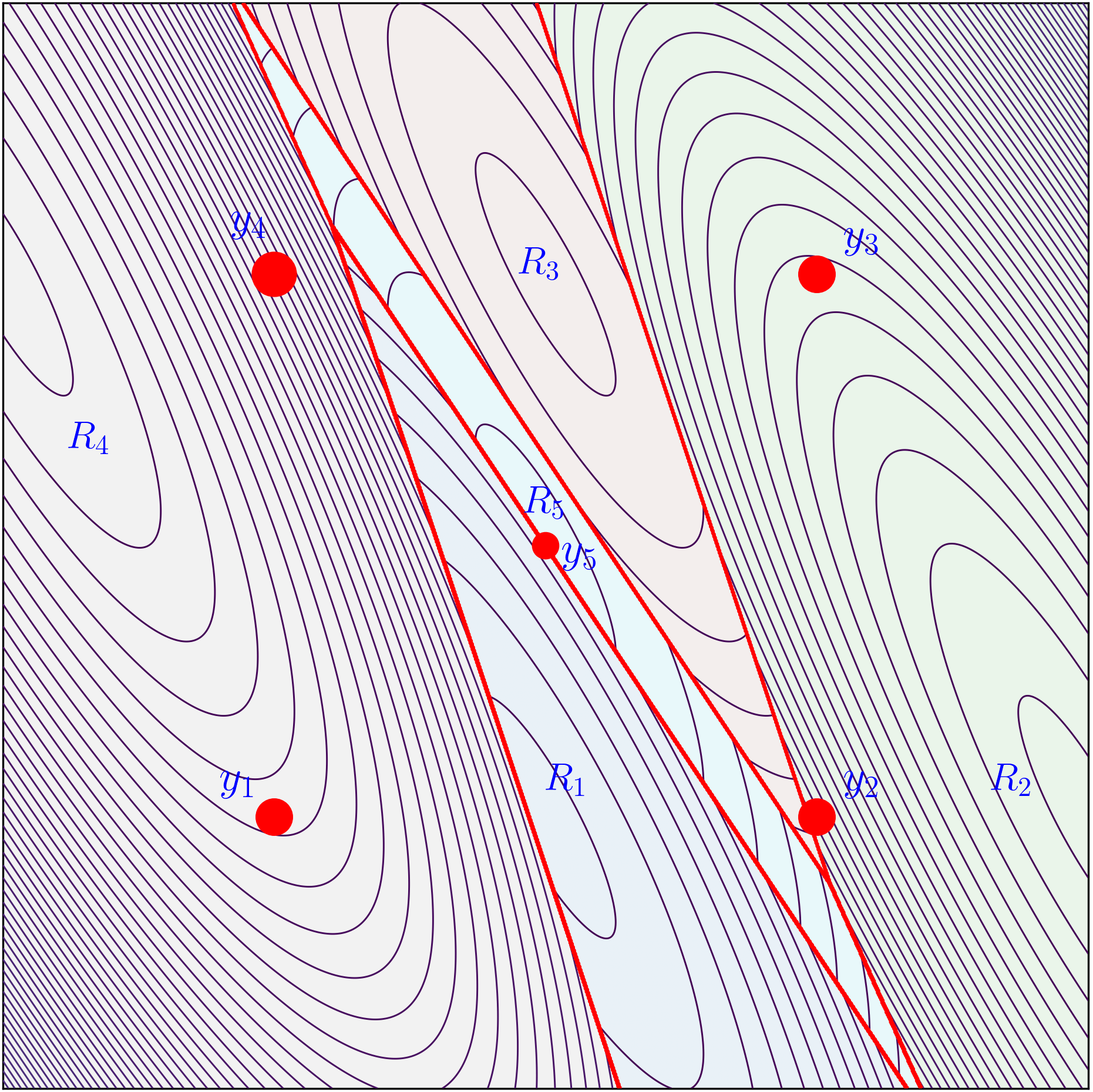}
                    & \includegraphics[width=0.32\linewidth]{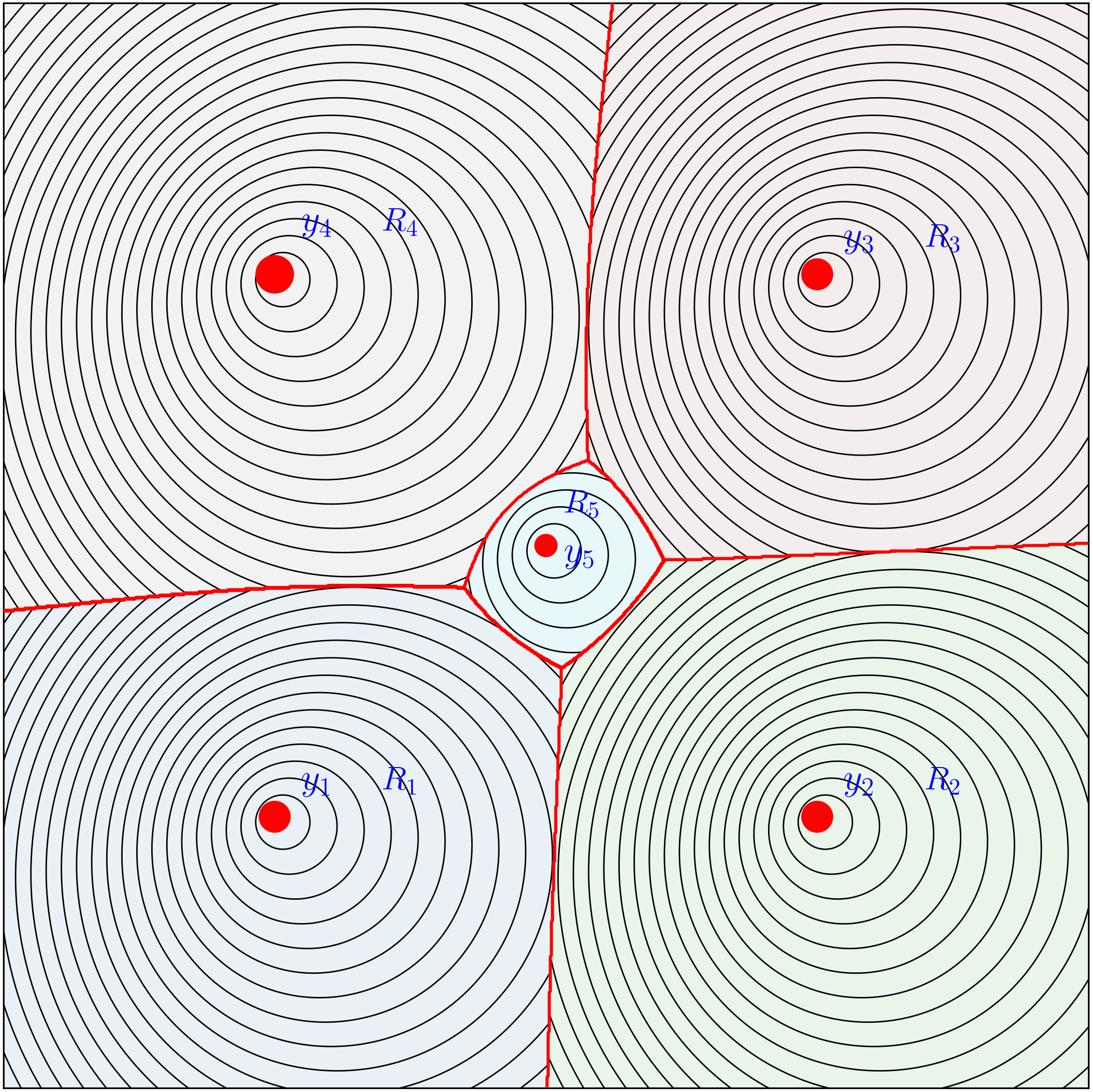}\\[-1pt]       \includegraphics[width=0.32\linewidth]{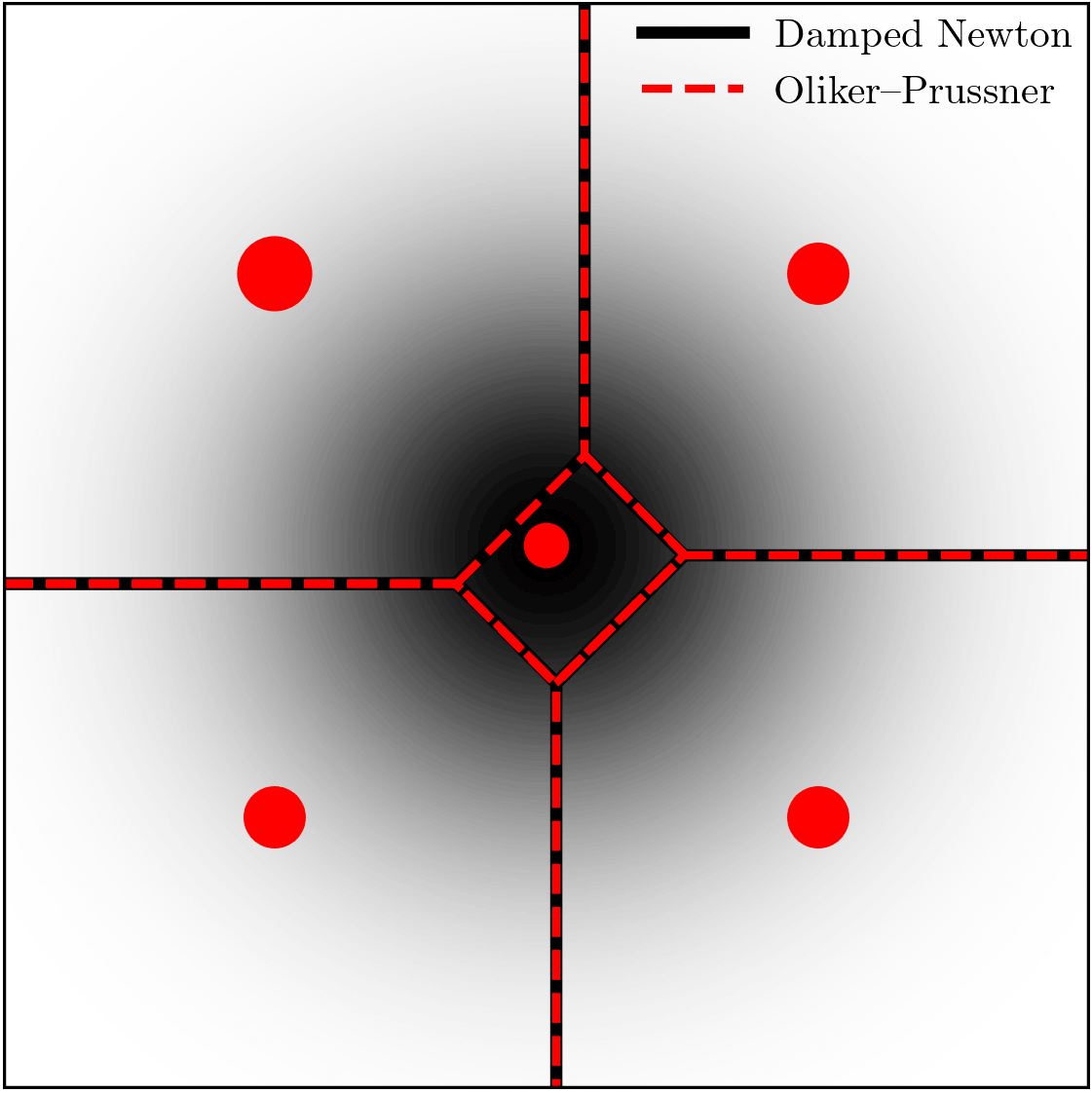}
                    & \includegraphics[width=0.32\linewidth]{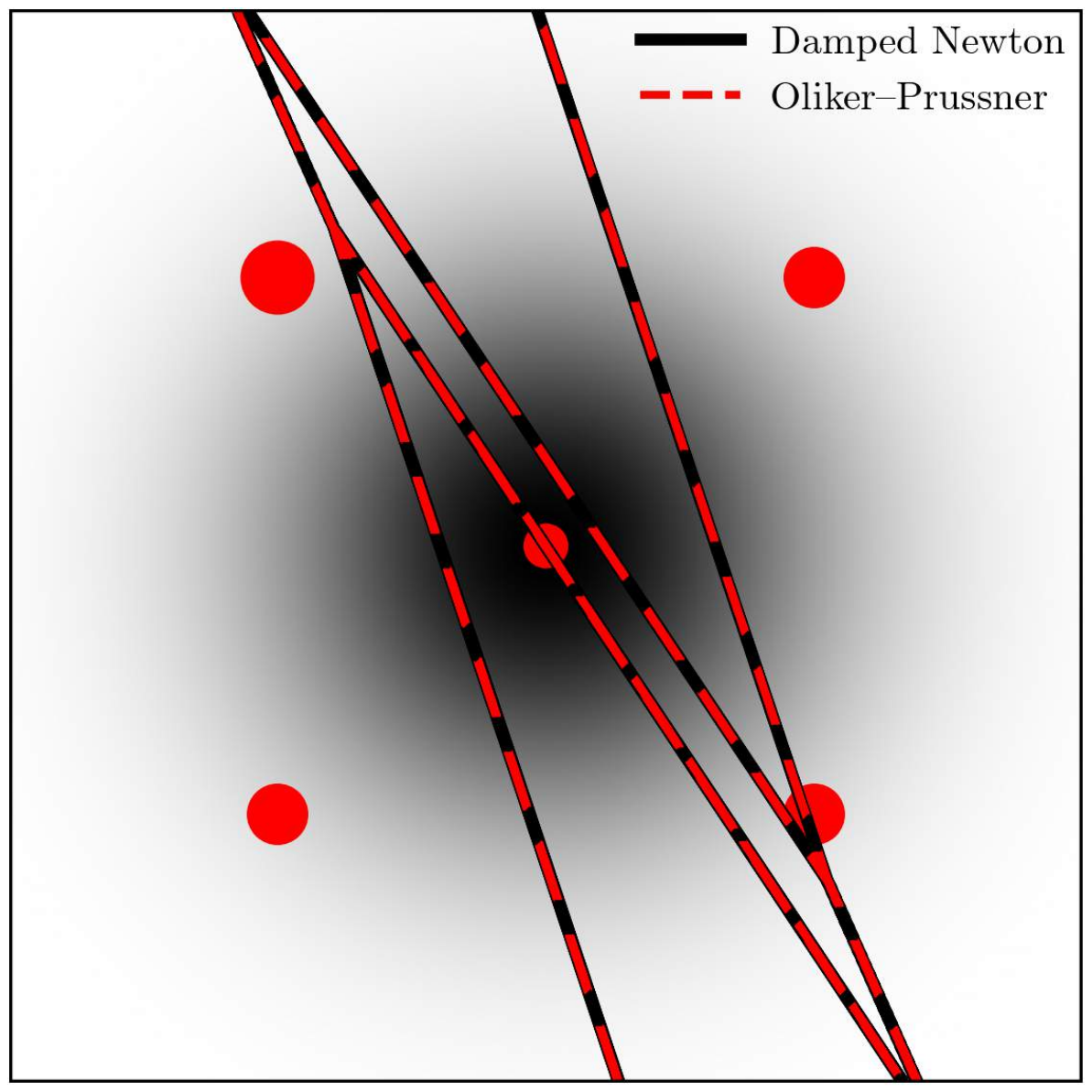}
                    & \includegraphics[width=0.32\linewidth]{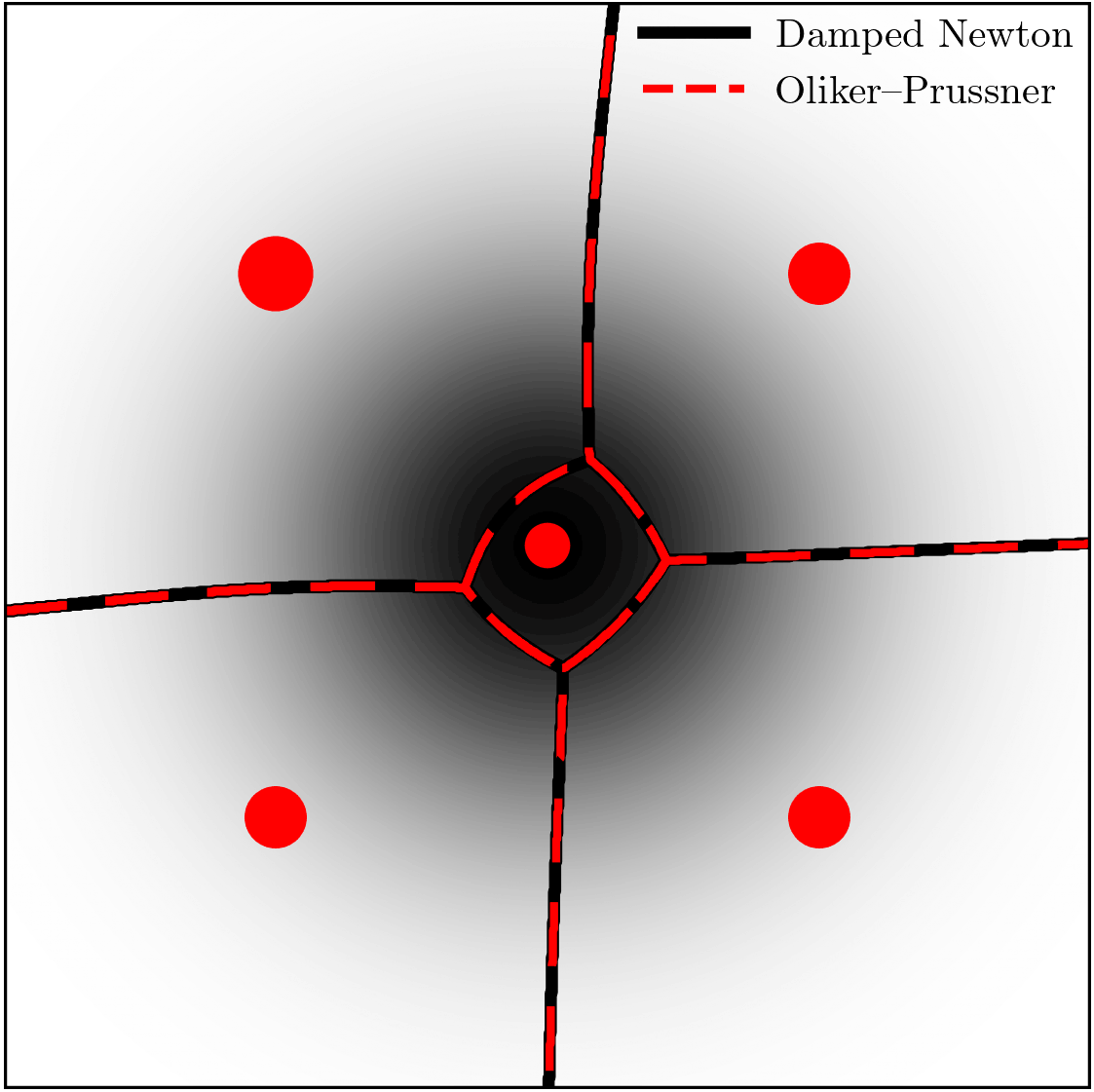}\\[-1pt]
                \includegraphics[width=0.32\linewidth]{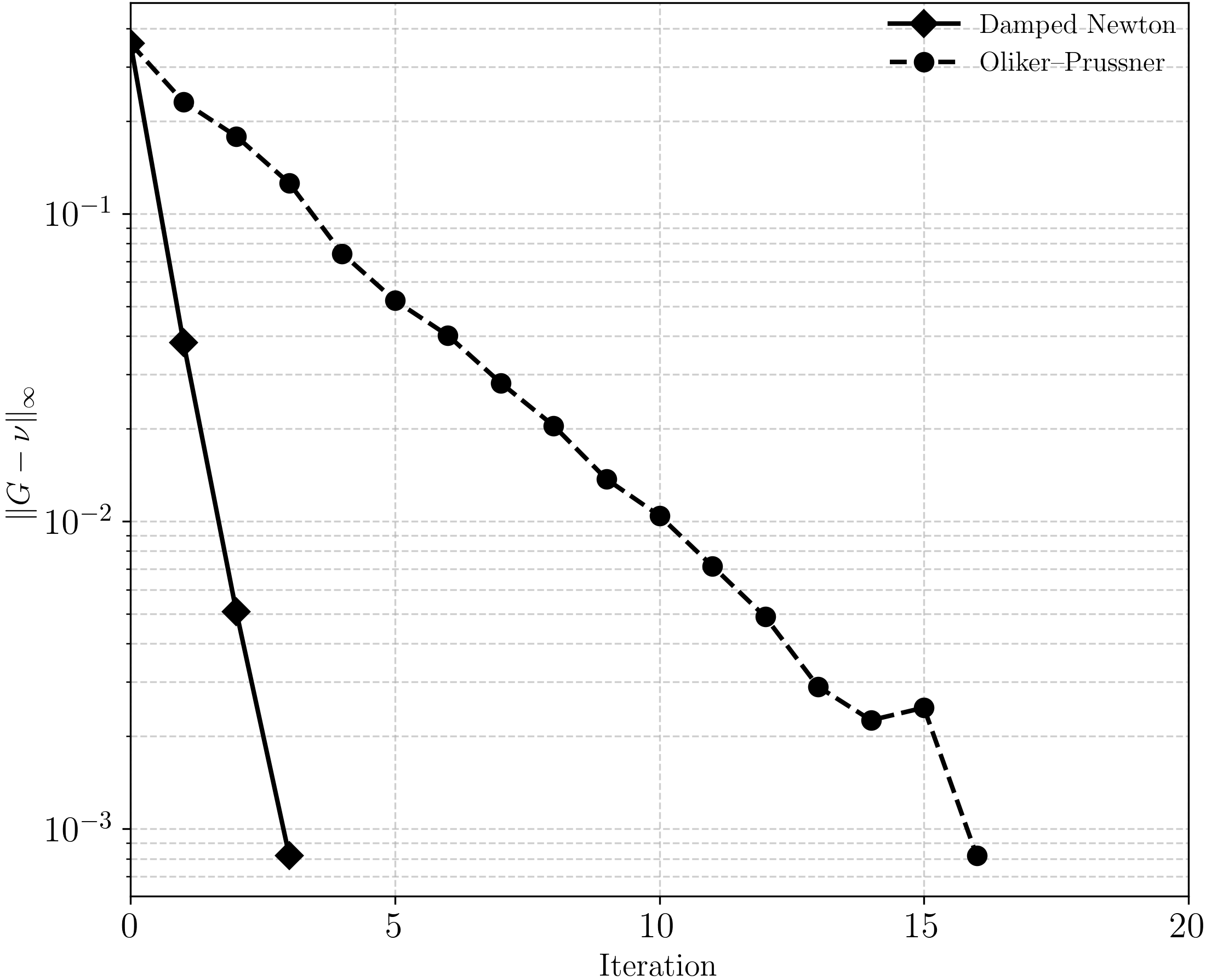}
                    & \includegraphics[width=0.32\linewidth]{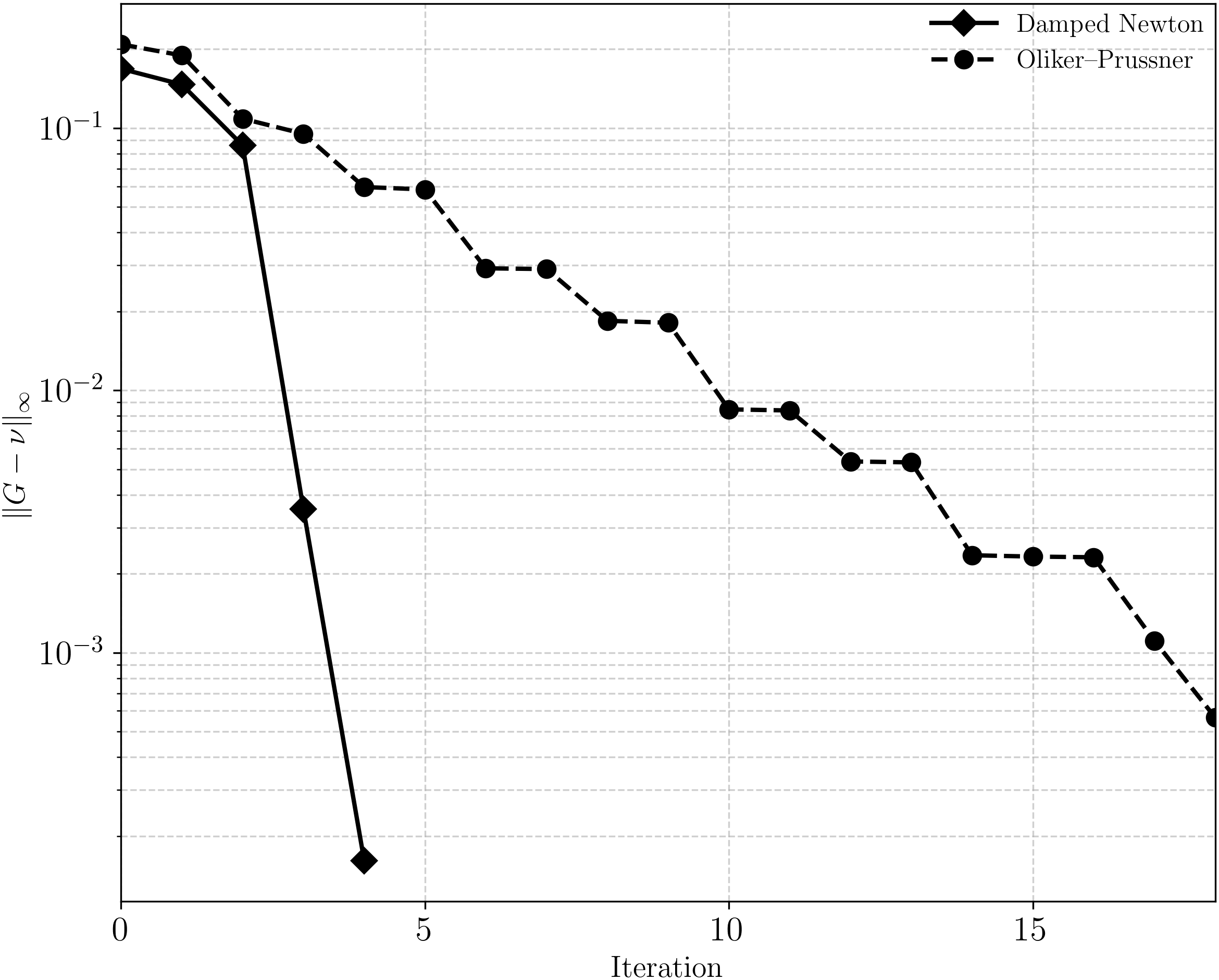}
                    & \includegraphics[width=0.32\linewidth]{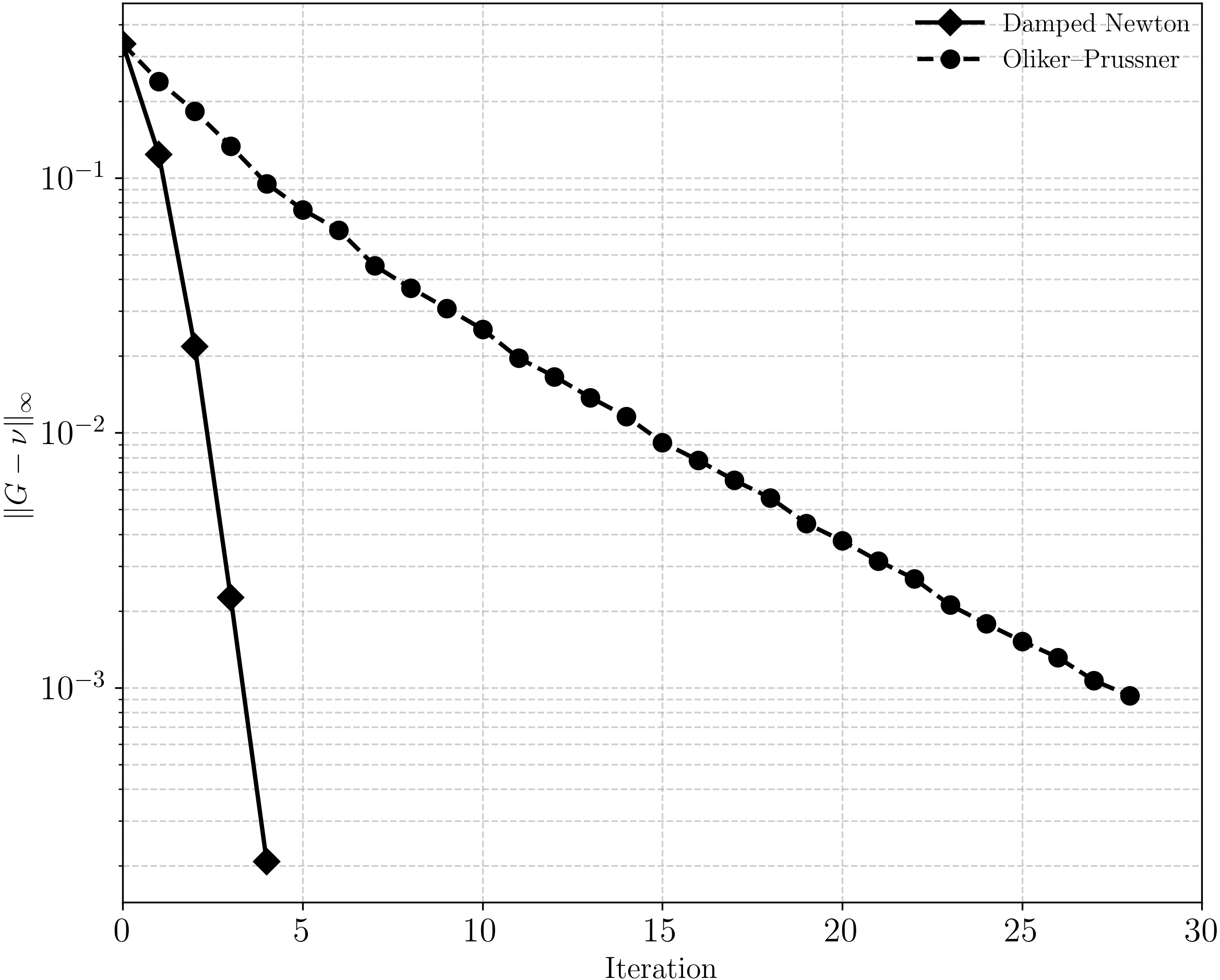}\\[-1pt]
\includegraphics[width=0.31\linewidth]{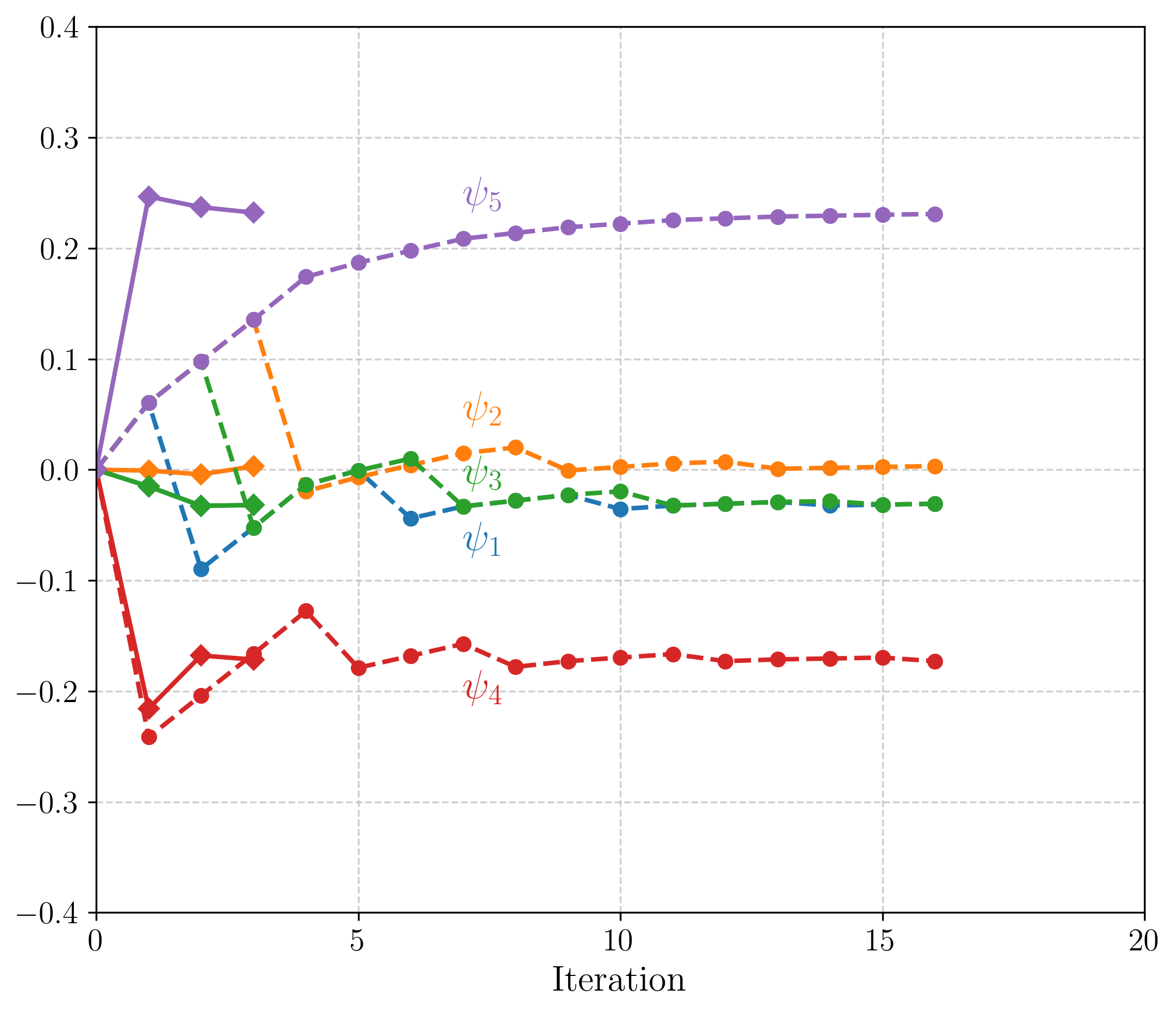}
                    & \includegraphics[width=0.32\linewidth]{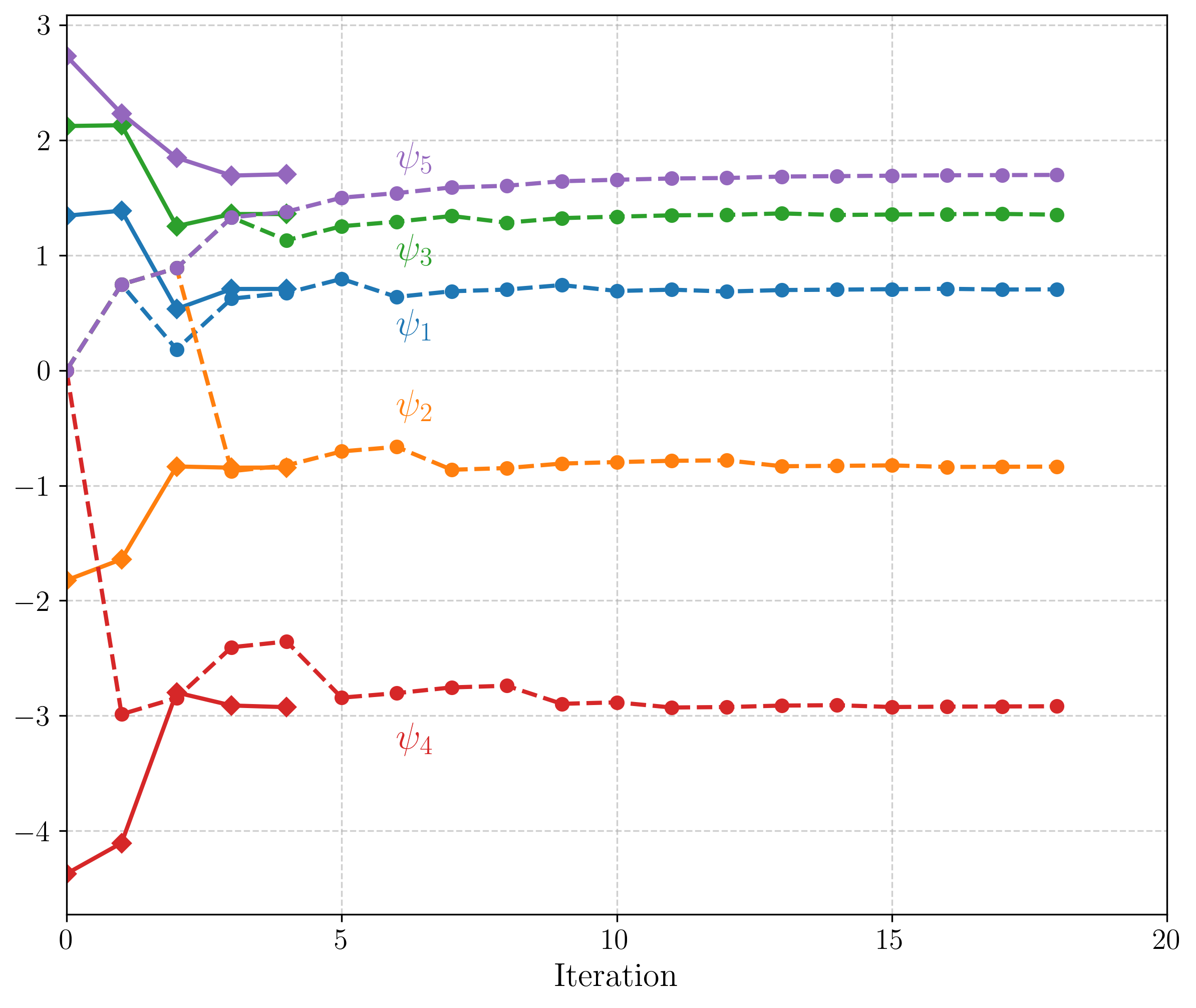}
                    & \includegraphics[width=0.32\linewidth]{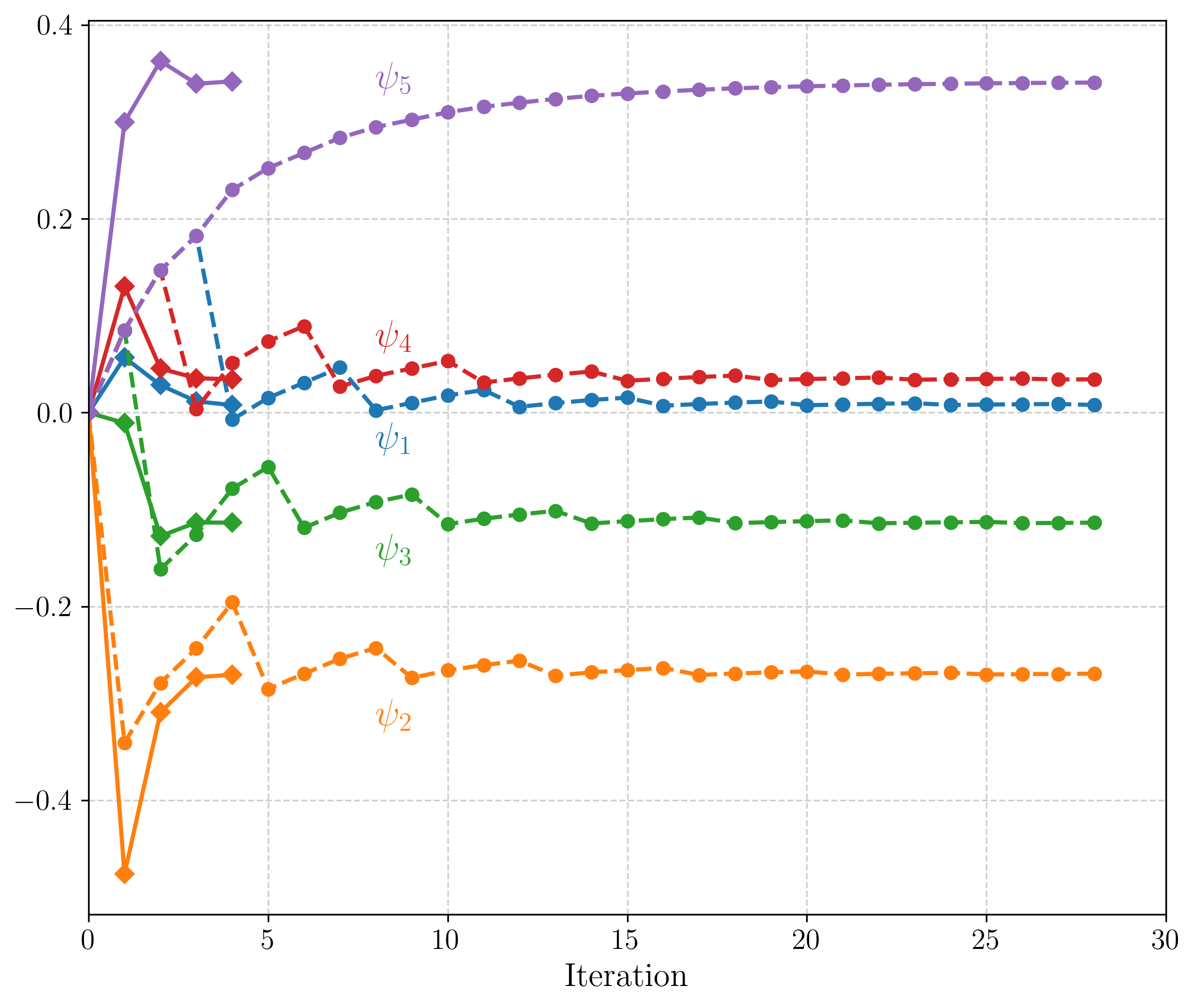}\\[-1pt]
                \end{tabular}%
\end{table*}

\noindent{\textbf{Control System Parameters for $c_{\mathrm{MinTime}}$.}} For the minimum time ground cost  \eqref{cZermeloMinTime}, we consider the setting in \cite[Sec. 6]{bakolas2010zermelo}, and fix (see Fig. \ref{fig:w})
\begin{align}
\bm{w}_t = \begin{cases}\bar{\bm{w}}+\bm{\rho} t, & 0 \leq t \leq \bar{t}, \\ \bar{\bm{w}}+\bm{\rho} \bar{t}, & t>\bar{t},\end{cases}
\label{wprofile}
\end{align}
where $\bar{\bm{w}}=(-0.3,0.2)^{\top}$, $\bm{\rho}=(0.05,-0.1)^{\top}$, $\bar{t}=4$, and $t\in[0,7]$. For these parameter values, the $\bm{w}_t$ in \eqref{wprofile} satisfies $\|\bm{w}_t\|_2\leq \|\bar{\bm{w}}\|_2\cdot\bar{t}\cdot\|\bm{\rho}\|_2 = \sqrt{65}/50 \approx 0.1612 < 1 \forall t\geq 0$. In this case, we numerically compute $c_{\mathrm{MinTime}}$ as the minimum of the real roots of a quartic and a quadratic equation whose coefficients are known functions of $\bar{\bm{w}},\bm{\rho},\bar{t}$, see \cite[Sec. 6]{bakolas2010zermelo}. For the above domain and problem data, that the projection condition holds was verified numerically. 


\subsection{Results and Discussions}\label{subsec:ResultsDiscussions} 
Our numerical results are summarized in Table \ref{tab:picturegrid} for three ground costs: $c_{\mathrm{SqEuclidean}}$, $c_{\mathrm{MinEnergy}}$, $c_{\mathrm{MinTime}}$. For each of these ground costs, we solved \eqref{SDOT} using both damped Newton \cite{kitagawa2019convergence} and Oliker-Prussner \cite[Sec. 4.2]{merigot2021optimal} algorithms with stopping criterion $\|\bm{G}-\bm{\nu}\|_{\infty}<10^{-3}$. While the CLTs obtained from these two methods match well (2\textsuperscript{nd} row of Table \ref{tab:picturegrid}), the damped Newton method converged faster (3\textsuperscript{rd}-4\textsuperscript{th} rows of Table \ref{tab:picturegrid}). This faster convergence and runtimes (Table \ref{tab:comp_time}) for the damped Newton method are consistent with those reported in \cite{kitagawa2019convergence}. Also, the first two rows in Table \ref{tab:picturegrid} depict the convex polyhedral CLT for $c_{\mathrm{MinEnergy}}$ (Theorem \ref{thm:MinEnergyTwist}) and the nonconvex CLT for $c_{\mathrm{MinTime}}$ (Remark \ref{Remark:NonconvexCLT}).



\begin{table}[t!]
\centering
\caption{{\small{Computational times for the CLTs in Sec. \ref{sec:Numerical}.
}}
}
\setlength{\tabcolsep}{5pt}        
\renewcommand{\arraystretch}{1.2}  
\begin{tabular}{l l l l}
\hline
Algorithm &  $c_{\mathrm{SqEuclidean}}$ &  $c_{\mathrm{MinEnergy}}$ &  $c_{\mathrm{MinTime}}$ \\
\hline\hline
Damped Newton      & 3.41 s & 4.64 s &  34.10 s\\
\hline
Oliker--Prussner   & 94.15 s & 105.55 s & 1295.00 s\\
\hline
\end{tabular}
\label{tab:comp_time}
\vspace*{-0.2in}
\end{table}




\bibliographystyle{IEEEtran}
\bibliography{references.bib}

\end{document}